\newtheorem{theorem}{Theorem}[section]
\newtheorem{corollary}{Corollary}[section]
\newtheorem{definition}{Definition}[section]
\newtheorem{example}[theorem]{Example}
\newtheorem{lemma}{Lemma}[section]
\newtheorem{proposition}[theorem]{Proposition}
\newtheorem{remark}{Remark}[section]
\newenvironment{proof}[1][Proof]{\textbf{#1.} }{\ \rule{0.5em}{0.5em}}
\newcommand{\nc}{\newcommand}
\nc{\Nn}{\hbox{I\!N}}
\nc{\R}{\mathbb{R}}\nc{\rmd}{\mathrm{d}}\nc{\rme}{\mathrm{e}}\nc{\rmi}{\mathrm{i}}
\nc{\dps}{\displaystyle} \nc{\un}{\underline} \nc{\Bb}{$\Box$}
\nc{\vareps}{\un{\un{\varepsilon}}} \nc{\eps}{\varepsilon}
\nc{\tu}{\widetilde{u}} \nc{\te}{\widetilde{e}}
\nc{\tpsi}{\widetilde{\Psi}} \nc{\ov}{\overline{v}}\nc{\N}{\mathbb{N}}
\nc{\oV}{\overline{V}} \nc{\wL}{\widetilde{L}} \nc{\bx}{{\bf x}}
\begin{document}


\begin{frontmatter}
\title{Explicit construction of operator scaling Gaussian random fields \tnoteref{label1}}


\author{M.Clausel\fnref{label2}}

\address[label2]{Laboratoire d'Analyse et   de
Math\'ematiques Appliqu\'ees,   UMR 8050 du CNRS, Universit\'e Paris Est,
             61 Avenue du G\'en\'eral de Gaulle,
             94010 Cr\'eteil Cedex,
            France.\\e-mail : clausel@univ-paris12.fr\\Tel : 01 45 17 17 61}
\author{B.Vedel\fnref{label3}}
\address[label3]{Laboratoire de Mathematiques et Applications des Math\'ematiques, Universite de Bretagne Sud, Universit\'e Europ\'eene de Bretagne
Centre Yves Coppens, Bat. B, 1er et., Campus de Tohannic BP 573,
56017 Vannes, France.\\e-mail: vedel@univ-ubs.fr\\Tel : 02 97 01 71 51}

\newpage

\begin{abstract}
We propose an explicit way to generate a large class of Operator
scaling Gaussian random fields (OSGRF). Such fields are
anisotropic generalizations of self-similar fields. More
specifically, we are able to construct any Gaussian field
belonging to this class with given Hurst index and exponent. Our
construction provides - for simulations of texture as well as for
detection of anisotropies in an image - a large class of models
with controlled anisotropic geometries and structures.
\end{abstract}

\begin{keyword}
Operator scaling Gaussian random field, anisotropy, pseudo-norms, harmonizable representation.
\MSC 60G15, 60G18 60G60, 60G17

\end{keyword}

\end{frontmatter}

\newpage

\section{Introduction}\label{SecOne}
Random fields are a useful tool for modelling spatial phenomena
such as environmental fields, including for example, hydrology,
geology, oceanography and medical images. Particularly important
is the fact that in many cases these random fields have an
anisotropic nature in the sense that they have different geometric
characteristics along different directions (see, for example,
Davies and Hall (\cite{DH99}), Bonami and Estrade (\cite{BE03})
and Benson, et al.(\cite{Ben06})).

Moreover, many times the model chosen has to include some
statistical dependence structure that might be present across the
scales. For this purpose, the usual assumption of self-similarity
is formulated. Unfortunately, the classical notion of
self-similarity (see \cite{Lamp62}), defined for a field
$\{X(x)\}_{x\in\mathbb{R}^d}$ on $\R^{d}$ by
$$
\{X(ax)\}_{x \in \R^d} \overset{\mathcal{L}}{=}  \{a^H X(x)\}_{x
\in \R^d}
$$
for some $H \in \R$ (called the Hurst index), is genuinely
isotropic and therefore
has to be changed to fit anisotropic situations.\\

For this reason, there has been an increasing interest in defining
a suitable concept for anisotropic self-similarity. Many authors
have developed techniques to handle anisotropy in the scaling. The
main papers that have to be mentioned in this context are those of Hudson and Mason, Schertzer and Lovejoy (see~\cite{HM82,SL85,SL87}).\\

This motivated the introduction by Bierm\'e, Meerschaert and
Scheffler of operator scaling random fields (OSRF)
in~\cite{BMS07}. These fields satisfy the following scaling
property :
\begin{equation}\label{selfsimilar}
\{X(a^Ex)\}_{x \in \mathbb{R}^d} \overset{\mathcal{L}}{=}  \{a^H
X(x)\}_{x \in \R^d}\;,
\end{equation}
for some $d\times d$ matrix $E$ with positive real parts of the
eigenvalues. \\

A large class of random fields obeys this property. For example
the Fractional Brownian Field (FBM) and the Fractional Brownian
Sheet (FBS) are both Operator Scaling Gaussian Random Fields
(OSGRF) with exponent $E=Id$. Denote $<\cdot,\cdot>$ the Euclidean
scalar product of $\R^d$ defined for any
$x=(x_1,\cdots,x_d)\in\R^d$ and $y=(y_1,\cdots,y_d)\in\R^d$ as
$<x,y>=\sum_{i=1}^d x_i y_i$. Recall that the FBS is the Gaussian
field $\{B_{H_1,\cdots,H_d}(x)\}_{x\in\R^d}$ defined for some
$(H_1,\cdots,H_d)\in (0,1)^d$ as :
\[
B_{H_1,\cdots,H_d}(x)=\int_{\R^d}\frac{(\rme^{\rmi
<x,\xi>}-1)\rmd\widehat{W}(\xi)}{|\xi_1|^{H_1+1/2}\cdots
|\xi_d|^{H_d+1/2}}\;,
\]
where $\rmd\widehat{W}$ is the Fourier transform of white noise on
$\mathbb{R}^d$. This Gaussian field enjoys with the following
scaling property : for all $(a_1,\cdots,a_d)\in (\R_+)^d$
\[
\{B_{H_1,\cdots,H_d}(a_1x_1,\cdots,a_d x_d)\}_{x=(x_1,\cdots,x_d)
\in \mathbb{R}^d}\overset{\mathcal{L}}{=}\{a_1^{H_1}\cdots
a_d^{H_d} B_{H_1,\cdots,H_d}(x_1,\cdots,x_d)\}_{x \in \R^d}\;.
\]
In particular, if we set $a=a_1=\cdots=a_d$, we recover that
\[
\{B_{H_1,\cdots,H_d}(a x_1,\cdots,a x_d)\}_{x=(x_1,\cdots,x_d) \in
\mathbb{R}^d}\overset{\mathcal{L}}{=}\{a^{H_1+\cdots+H_d}
B_{H_1,\cdots,H_d}(x_1,\cdots,x_d)\}_{x \in \R^d}\;,
\]
that is $B_{H_1,\cdots,H_d}$ satisfies
Property~(\ref{EqAutosimHermine}) with $E=Id$ and
$H=H_1+\cdots+H_d$.\\

In~\cite{BMS07} the existence of OSRF with stationary increments
in the stable case for any $d\times d$ matrix $E$ with positive
real parts of the eigenvalues is proved. A special class of OSRF
is defined through its harmonizable representation. For Gaussian
models, which is here the case of interest, it reduces to consider
an integral representation of the form
\[
\int_{\R^d}(\rme^{\rmi <x,\xi>}-1)f^{1/2}(\xi)\rmd
\widehat{W}(\xi)\;,
\]
where $f$ is a positive valued function defined on $\R^d$
satisfying
\[
\int_{\R^d}(1\wedge \|\xi\|^2)f(\xi)\rmd\xi\;<\infty\;,
\]
for any norm $\|\cdot\|$ on $\R^d$. Such a function $f$ is called
{\bf a spectral density}. In order to recover the scaling
property~(\ref{EqAutosimHermine}), the spectral density $f$ is
required to satisfy additional specific homogeneity properties
(see Section~\ref{SecTwo} below). In~\cite{BMS07}, such spectral
densities are defined by an integral formula. This is a non
explicit definition in the sense that actual computations require
numerical approximations. However, these calculations are, in
practice, quite difficult to implement. Nevertheless, a simpler
and explicit
formula is furnished in the particular case of diagonalizable matrices. \\

In this paper, we mainly aim at providing {\bf a complete
description through explicit formulae} for the spectral densities
in the model defined in~\cite{BMS07}. We focus on a specific case
: The {\bf Gaussian} model. The motivation of this restriction is
twofold. On the one hand, it is a reasonable assumption in many
applications; on the other hand, to improve the model, it is
necessary to understand and classify its geometrical properties
which is easier in the Gaussian case.\\

Our main results are stated and proved in Section~\ref{SecFour}.
The first ones, Lemma~\ref{LemCas1}, Lemma~\ref{LemCas2},
Lemma~\ref{LemCas3} and Lemma~\ref{LemCas4}
\begin{enumerate}
\item reduces the construction of an explicit example of OSGRF for
a fixed matrix $E$ and an admissible Hurst exponent $H$ (as
defined in Section~\ref{SecTwo}) to four particular cases related
to specific geometries, \item provides an explicit example in each
of these four specific cases.
\end{enumerate}
Thus, we are able to provide an explicit example of OSGRF
satisfying Equation~(\ref{selfsimilar}) and then extend the
already existing results. Moreover, our second result Theorem
\ref{Ttransfert}, gives a very simple relationship existing
between all possible spectral densities associated to a given
exponent $E$. This result is not formal and can also be turned
into an algorithm which generates different fields -- with
different geometries -- satisfying Equation~(\ref{selfsimilar})
for the same matrix of anisotropy $E$.

These results have important consequences. Firstly, it allows to
define the studied class of OSGRF from four specific cases.
Furthermore we give a complete description of the whole class of
spectral densities of these fields. Finally, since our
construction is explicit, the numerical simulations of OSGRF
become much easier. Thus, our approach provides an explicit
definition of an interesting and large class of fields for
simulations of textures with new geometries. There is actually a
practical motivation to be able to compare natural/real images (of
clouds, bones,...) and models with controlled anisotropy.

In the following pages, we are given
$d\in\mathbb{N}\setminus\{0\}$ and $E$ a $d\times d$ matrix with
positive real parts of the eigenvalues. We define
$$
\lambda_{\min}(E)=\min\limits_{\lambda\in
Sp(E)}(\mathrm{Re}(\lambda))\;.
$$
For any $a>0$ recall that $a^E$ is
defined as follows
\[
a^E=\exp(E\log(a))=\sum\limits_{k\geq
0}\frac{\log^k(a)\;E^k}{k!}\;.
\]
As usual, $E^t$ denotes the transpose of the matrix $E$.\\
We denote $|\cdot|$ the Euclidean norm defined for any
$x=(x_1,\cdots,x_d)\in\R^d$ as
\[
|x|=\left(\sum_{i=1}^d x_i^2\right)^{1/2}\;.
\]

\section{Presentation of the model : Operator Scaling Random Fields (OSRF)}\label{SecTwo}
Let us recall some preliminary facts about Operator Scaling Random
Fields (OSRF) and Operator Scaling Gaussian Random Fields (OSGRF).
We refer to \cite{BMS07} for all the material of this section.
\begin{definition}
A scalar--valued random field $\{X(x)\}_{x\in\mathbb{R}^{d}}$ is
called  operator--scaling if there exists a $d\times d$ matrix $E$
with positive real parts of the eigenvalues and some $H>0$ such
that
\begin{equation}
\{X(a^{E}x)\}_{x\in\mathbb{R}^{d}}\overset{{\mathcal{L}}}{=}\{a^HX(x)\}_{x\in\mathbb{R}^{d}}\;,
\label{EqAutosimHermine}
\end{equation}
where $\overset{(\mathcal{L})}{=}$ denotes equality of all
finite-dimensional marginal distributions. Matrix $E$ and real
number $H$ are respectively called an exponent (of scaling) or an
anisotropy, and an Hurst index of the field.
\end{definition}
\begin{remark} In general, the exponent $E$ and the Hurst index $H$ of an OSRF are not
unique.
\end{remark}
Thus the usual notion of self-similarity is extended replacing
{\bf usual scaling}, (corresponding to the case where $E=Id$)  by
{\bf a linear scaling} involving matrix $E$ (see Figure~1 below).
It allows to define new classes of random fields with new geometry
and structure.
\begin{figure}[H]\label{Fig1}
\begin{minipage}[c]{.32\linewidth}
    \centering
 \includegraphics[angle=0,width=.95\textwidth]{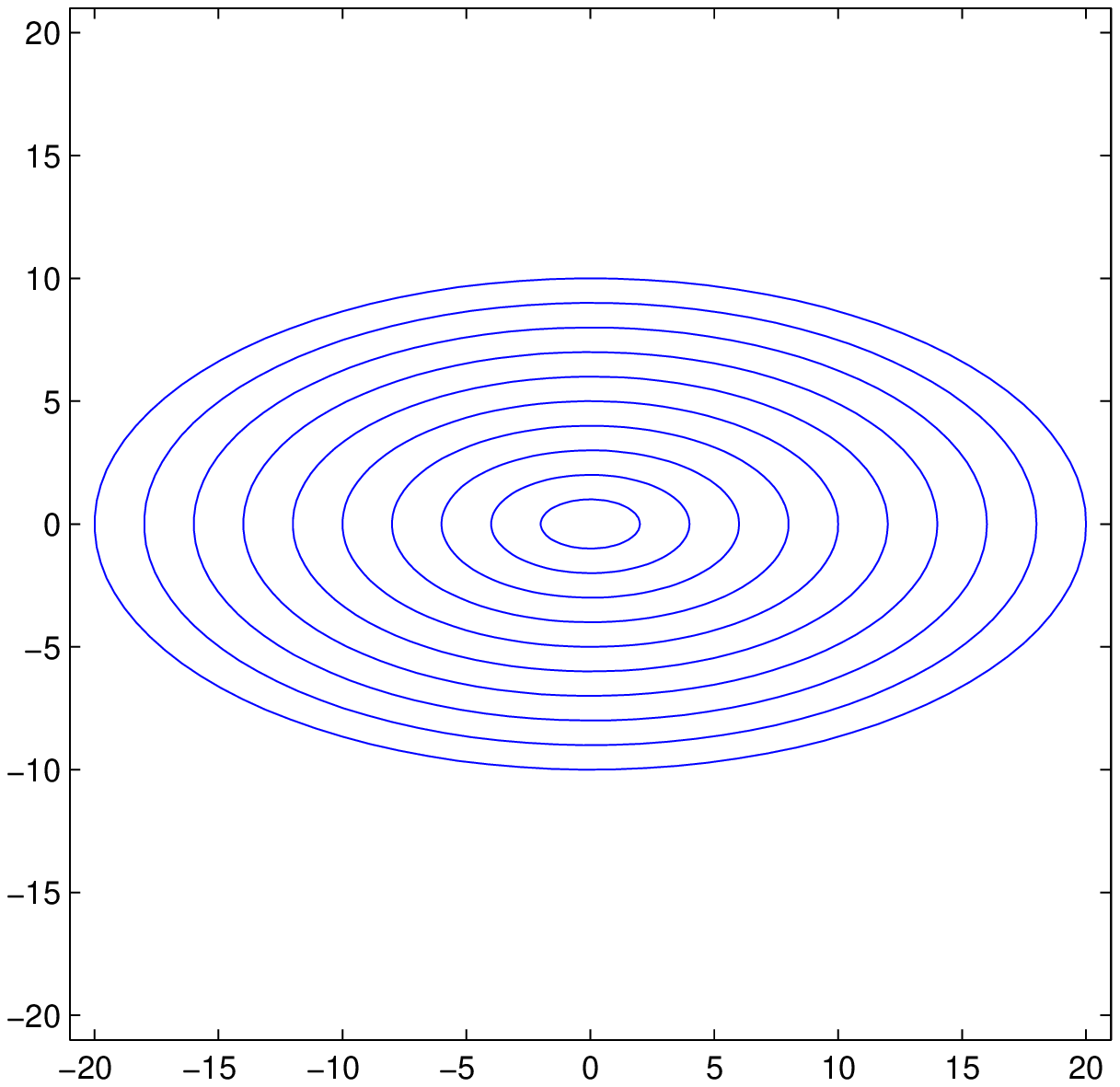}
\begin{center}$E=\begin{pmatrix}1&0\\0&1\end{pmatrix}$, $\lambda\in\{1,\cdots,10\}$\end{center}
\end{minipage}\hfill
  \begin{minipage}[c]{.32\linewidth}
    \hspace{.025\textwidth}
      \includegraphics[angle=0,width=.95\textwidth]{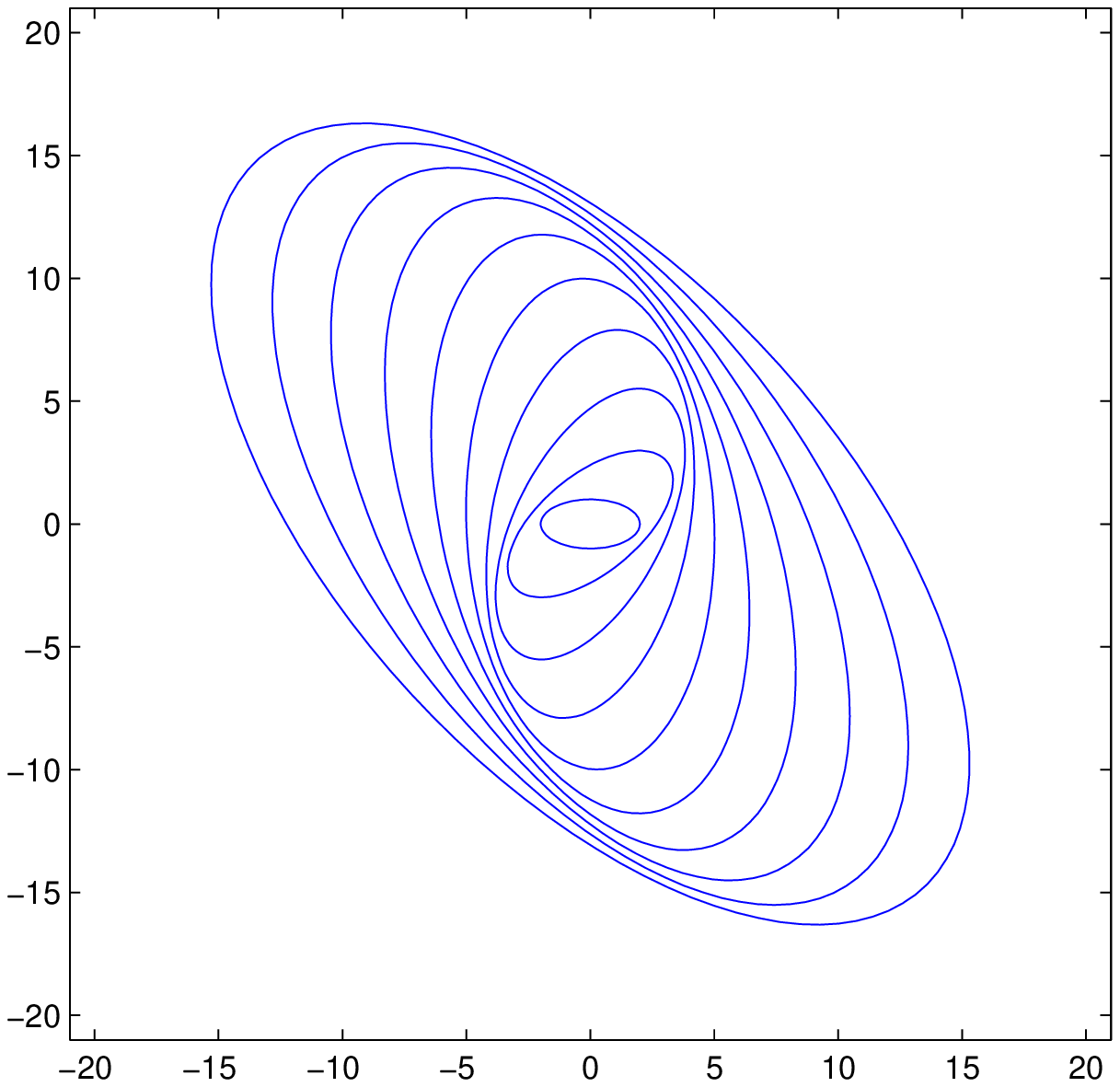}
      \begin{center}$E=\begin{pmatrix}1&-1\\1&1\end{pmatrix}$, $\lambda\in\{1,\cdots,10\}$\end{center}
\end{minipage}
 \begin{minipage}[c]{.32\linewidth}
    \hspace{.025\textwidth}
       \includegraphics[angle=0,width=.95\textwidth]{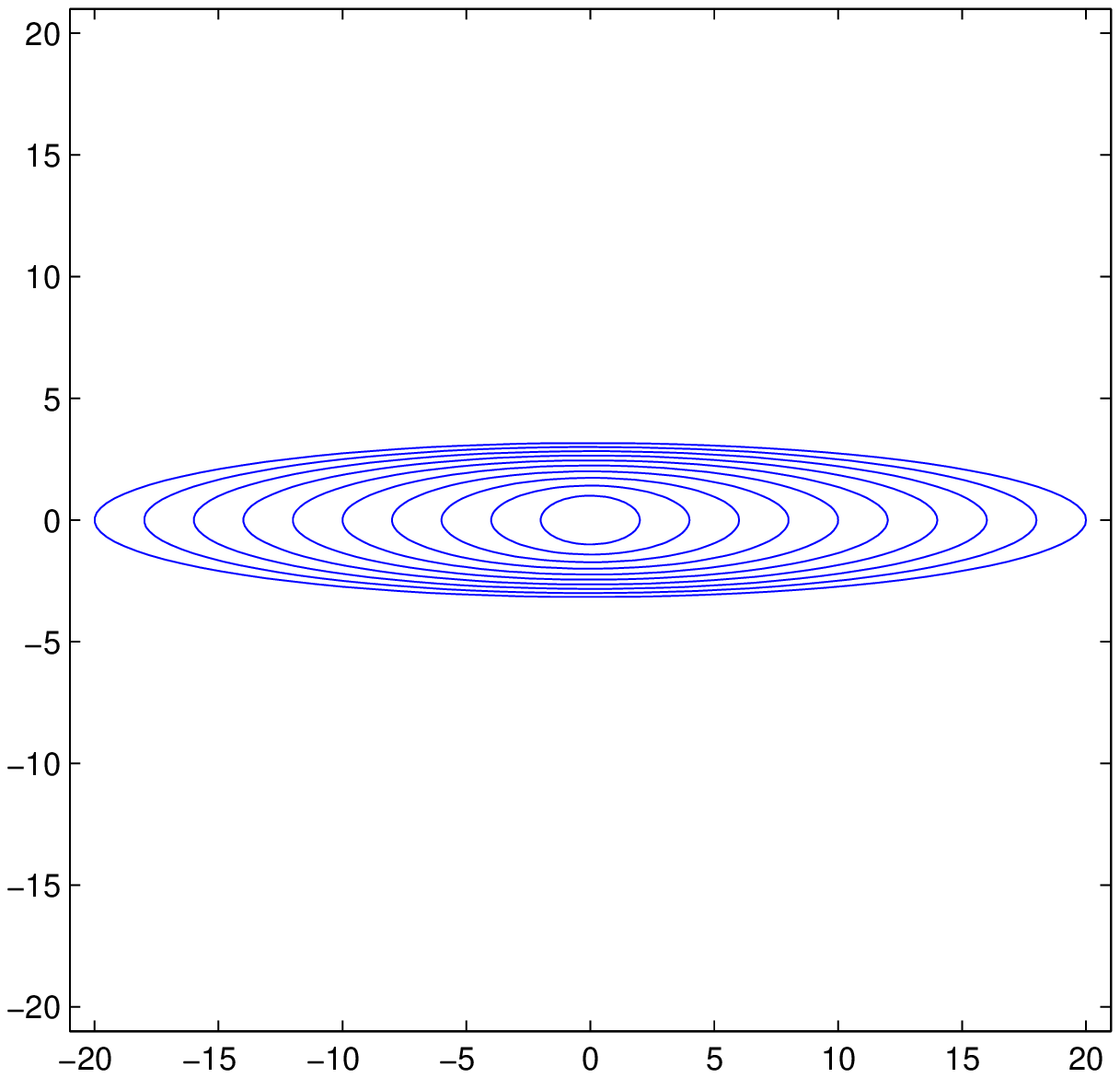}
       \begin{center}$E=\begin{pmatrix}1&0\\0&1/2\end{pmatrix}$, $\lambda\in\{1,\cdots,10\}$\end{center}
\end{minipage}
\begin{center}
\begin{caption} \;Action of a linear scaling $x\mapsto \lambda^{E} x$ on a ellipsis.\end{caption}
\end{center}
\end{figure}

As said in the introduction, when the matrix $E$ is given, the
class of OSRF with exponent $E$ may be very general.
In~\cite{BMS07}, for any given admissible matrix $E$, the
existence of OSRF with
stationary increments is proved using a harmonisable representation.\\

Recall that according to~\cite{Yag57} or \cite{GV67}, given a
stochastically continuous Gaussian field with stationary
increments $\{X(x)\}_{x\in\R^d}$, its covariance can be
represented as
\[
\mathbb{E}(X(x)X(y))=\int_{\R^d}(\rme^{\rmi
<x,\xi>}-1)(\rme^{-\rmi <y, \xi>}-1)\rmd\mu(\xi)+<x, Qy>\;,
\]
where $Q$ is a $d\times d$ non--negative definite matrix and $\mu$
a non--negative symmetric measure $\mu$ such that
\[
\int_{\R^d}(1\wedge \|\xi\|^2)\rmd\mu(\xi)\;.
\]
for any norm $\|\cdot\|$ on $\R^d$. Measure $\mu$ and matrix $Q$
are proved to be unique. Measure $\mu$ is called the spectral
measure of $\{X(x)\}_{x\in\R^d}$. In the case where this measure
is absolutely continuous with respect to Lebesgue measure, the
density function of $\mu$ is called the spectral density of the
Gaussian field $\{X(x)\}_{x\in\R^d}$. In this case, the Gaussian
field $\{X(x)\}_{x\in\R^d}$ can thus be represented as
\begin{equation}\label{EqHarmRepGene}
X(x)\overset{\mathcal{L}}{=}\int_{\R^d}(\rme^{\rmi
<x,\xi>}-1)f^{1/2}(\xi)\rmd\widehat{W}(\xi)\;,
\end{equation}
with
\begin{equation}\label{EqSDGene}
\int_{\R^d}(1\wedge \|\xi\|)f(\xi)\rmd\xi<+\infty\;,
\end{equation}
for any norm $\|\cdot\|$ on $\R^d$. This representation is then
called the harmonisable representation
of the Gaussian field $\{X(x)\}_{x\in\R^d}$.\\

To prove the existence of OSRGF with stationary increments for any
admissible matrix $E$, a quite natural approach is then to use a
harmonisable representation of the form~(\ref{EqHarmRepGene}).
In~\cite{BMS07}, the following result is proved. We state it only
in the Gaussian case :
\begin{theorem}\label{ThDefX}
Let $\rho$ a continuous function defined on $\mathbb{R}^d$ with
non--negative values such that for all $x\in \mathbb{R}^d
\setminus\{0\}$, $\rho(x)\neq 0$. Assume that $\rho$ is
$E^{t}$-homogeneous that is :
\[
\forall a>0,\,\forall
\xi\in\mathbb{R}^{d},\,\rho(a^{E^{t}}\xi)=a\rho(\xi)\;.
\]
Then the Gaussian field $\{X_{\rho}(x)\}_{x\in\mathbb{R}^{d}}$
defined as follows
\begin{equation}\label{EqDefX}
X_{\rho}(x)=\int_{\mathbb{R}^{d}}(\rme^{\rmi<x,\xi>}-1)\rho(\xi)^{-H-\frac{Tr(E)}{2}}\rmd\widehat{W}(\xi),
\end{equation}
exists and is stochastically continuous if and only if $H\in
(0,\lambda_{\min}(E))$. Moreover, this field has the following
properties :
\begin{enumerate}
\item Stationary increments, that is for any $h\in\mathbb{R}^{d}$
\[
\{X_{\rho}(x+h)-X_{\rho}(h)\}_{x\in\mathbb{R}^{d}}\overset{(fd)}{=}\{X_{\rho}(x)\}_{x\in\mathbb{R}^{d}}\;.
\]
\item Operator scaling : The scaling
relation~(\ref{EqAutosimHermine}) is satisfied.
\end{enumerate}
\end{theorem}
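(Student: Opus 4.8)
The plan is to treat the existence-and-continuity equivalence first, since it is the analytic core, and then to obtain the two structural properties by short covariance computations. As $\{X_{\rho}(x)\}_{x\in\R^d}$ is defined through a Gaussian stochastic integral against $\rmd\widehat{W}$, it is a well-defined centred Gaussian $L^2$-field exactly when the integrand is square-integrable for every $x$, i.e.\ when
\[
I(x)=\int_{\R^d}|\rme^{\rmi<x,\xi>}-1|^2\,\rho(\xi)^{-2H-Tr(E)}\,\rmd\xi<+\infty .
\]
Using the elementary equivalence $|\rme^{\rmi<x,\xi>}-1|^2\asymp 1\wedge|<x,\xi>|^2$, I would show that the finiteness of $I(x)$ for every $x\in\R^d$ is equivalent to
\[
\int_{\R^d}\bigl(1\wedge|\xi|^2\bigr)\,\rho(\xi)^{-2H-Tr(E)}\,\rmd\xi<+\infty ,
\]
the reduction being obtained by testing against the $d$ coordinate directions $x=e_j$ near the origin and using boundedness of the phase far from it. Once this holds, stochastic continuity is automatic: $\mathbb{E}|X_{\rho}(x_n)-X_{\rho}(x)|^2=\int|\rme^{\rmi<x_n,\xi>}-\rme^{\rmi<x,\xi>}|^2\rho(\xi)^{-2H-Tr(E)}\,\rmd\xi$ tends to $0$ by dominated convergence, the integrand being dominated by a fixed multiple of the integrable function above on a neighbourhood of $x$.

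The key step is to compute the last integral in the anisotropic polar coordinates attached to $E^t$. Because $\rho$ is continuous, vanishes only at the origin and is $E^t$-homogeneous, each $\xi\neq0$ has a unique representation $\xi=r^{E^t}\theta$ with $r=\rho(\xi)>0$ and $\theta$ in the compact set $S_{\rho}=\{\rho=1\}$. I would establish the associated change-of-variables formula $\rmd\xi=r^{Tr(E)-1}\,\rmd r\,\rmd\sigma(\theta)$ for a suitable finite measure $\sigma$ on $S_{\rho}$ (recall $Tr(E^t)=Tr(E)$), and use $\rho(r^{E^t}\theta)=r$ to rewrite the integral as
\[
\int_0^{+\infty}\!\!\Bigl(\int_{S_{\rho}}\bigl(1\wedge|r^{E^t}\theta|^2\bigr)\,\rmd\sigma(\theta)\Bigr)\,r^{-2H-1}\,\rmd r .
\]
Convergence is then entirely dictated by the behaviour of the radial profile as $r\to0^+$ and $r\to+\infty$.

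The main obstacle is to extract from this the sharp threshold $H\in(0,\lambda_{\min}(E))$, and in particular the ``only if'' direction. For large $r$ the factor $1\wedge|r^{E^t}\theta|^2$ equals $1$, so the tail reduces to $\int^{+\infty}r^{-2H-1}\rmd r$, finite iff $H>0$. For small $r$ I would invoke the standard spectral estimates for $a^{E^t}$ coming from the Jordan decomposition of $E$: for every $\eps>0$ one has $|r^{E^t}\theta|\leq C\,r^{\lambda_{\min}(E)-\eps}$ uniformly in $\theta\in S_{\rho}$ when $r\leq1$, which yields $\int_0 r^{2\lambda_{\min}(E)-2\eps-2H-1}\rmd r$ and hence convergence for every $H<\lambda_{\min}(E)$. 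The delicate point is the converse: a uniform lower bound of the same order fails, since directions lying in faster-contracting eigenspaces decay like $r^{\lambda_{\max}(E)}$. I would instead argue that, for a set of $\theta$ of positive $\sigma$-measure (those with a nonzero component along the $\lambda_{\min}$-eigenspace of $E^t$), $|r^{E^t}\theta|$ is bounded below by $c\,r^{\lambda_{\min}(E)}$ up to logarithmic factors, so that the averaged radial profile decays exactly like $r^{2\lambda_{\min}(E)}$; this forces divergence as soon as $H\geq\lambda_{\min}(E)$. Combining the two cases gives the claimed equivalence.

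Finally, the two properties follow by matching covariances, the fields being centred Gaussian. For stationary increments, the covariance of $X_{\rho}(\cdot+h)-X_{\rho}(h)$ factors out a phase $\rme^{\rmi<h,\xi>}\rme^{-\rmi<h,\xi>}=1$ under the integral and therefore coincides with that of $X_{\rho}$. For operator scaling, writing $<a^Ex,\xi>=<x,a^{E^t}\xi>$ and substituting $\eta=a^{E^t}\xi$ (so that $\rmd\xi=a^{-Tr(E)}\rmd\eta$), the homogeneity $\rho(a^{-E^t}\eta)=a^{-1}\rho(\eta)$ turns $\rho(\xi)^{-2H-Tr(E)}$ into $a^{2H+Tr(E)}\rho(\eta)^{-2H-Tr(E)}$; the Jacobian factor $a^{-Tr(E)}$ cancels, leaving $\mathbb{E}[X_{\rho}(a^Ex)X_{\rho}(a^Ey)]=a^{2H}\,\mathbb{E}[X_{\rho}(x)X_{\rho}(y)]$, which is exactly the covariance identity equivalent to~(\ref{EqAutosimHermine}) for centred Gaussian fields.
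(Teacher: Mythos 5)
The paper does not actually prove this statement: Theorem~\ref{ThDefX} is quoted verbatim from \cite{BMS07} (the text says ``In~\cite{BMS07}, the following result is proved. We state it only in the Gaussian case''), so there is no in-paper proof to compare against. Your argument is, however, precisely the standard one used in that reference: reduce existence to the spectral integrability condition $\int(1\wedge\|\xi\|^2)\rho(\xi)^{-2H-Tr(E)}\rmd\xi<\infty$, pass to the $E^t$-polar coordinates $\xi=r^{E^t}\theta$ with Jacobian $r^{Tr(E)-1}$, read off the threshold from the spectral bounds on $\|r^{E^t}\|$, and obtain stationarity of increments and operator scaling by matching covariances of centred Gaussian fields (the change of variables $\eta=a^{E^t}\xi$ with Jacobian $a^{-Tr(E)}$ and the homogeneity of $\rho$ exactly cancelling as you describe). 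All of that is correct.

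The one place you should tighten is the ``only if'' direction at the endpoint $H=\lambda_{\min}(E)$. The crude lower bound obtained by inverting $\rme^{\log(r)N}$ gives $|r^{E^t}\theta|\ge c\,r^{\lambda_{\min}}|\log r|^{-(m-1)}$, and a \emph{negative} power of $|\log r|$ in the radial profile can restore convergence of $\int_0 r^{2\lambda_{\min}-2H-1}|\log r|^{-2(m-1)}\rmd r$ precisely when $H=\lambda_{\min}$, so ``up to logarithmic factors'' is not enough as stated. The fix is to use the dominant term of the nilpotent exponential rather than its inverse: restricting to the positive-$\sigma$-measure set of $\theta$ whose component $N^{j_0}\theta$ in the top nonvanishing nilpotent order is bounded away from zero, one gets $|r^{E^t}\theta|\ge c\,r^{\lambda_{\min}}|\log r|^{j_0}\ge c\,r^{\lambda_{\min}}$ for small $r$, with a \emph{nonnegative} log power, and then divergence for all $H\ge\lambda_{\min}(E)$ follows as you intend. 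With that adjustment the proof is complete.
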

\begin{remark}
Through this new class of Gaussian fields, even if it
 is a quite general model, we do not describe the
whole class of OSRGF with stationary increments.
\end{remark}
\begin{remark}
If $H\in (0,\lambda_{\min}(E))$, $f(\xi)=\rho(\xi)^{-2H-Tr(E)}$ is
proved to be a spectral density in the sense that~(\ref{EqSDGene})
holds. Moreover, the spectral density of the Gaussian field
$\{X_{\rho}(x)\}_{x\in\mathbb{R}^{d}}$ is $f$. Observe that $f$ is
continuous and satisfies a specific homogeneity assumption. This
homogeneity assumption is necessary for the operator scaling
property of the Gaussian field $\{X_{\rho}(x)\}_{x\in\R^d}$
whereas the continuity assumption ensures that the field
$\{X_{\rho}(x)\}_{x\in\R^d}$ being defined is stochastically
continuous.
\end{remark}

The main difficulty to overcome is to define {\bf suitable
spectral densities} of this new class of Gaussian fields using
continuous, $E^t$-homogeneous functions with positive values.
In~\cite{PGL94} such functions are called {\bf $(\R^d,E^t)$
pseudo-norms}. They can be defined using an integral formula (see
Theorem~2.11 of~\cite{BMS07}) :
\begin{proposition}
Function $\rho$ defined as
\[
\rho(\xi)=\int_{S_{0}}\int_{0}^{\infty}(1-\cos(<x,r^{E^{t}}\theta>))\frac{\rmd
r}{r^{2}}\rmd\mu(\theta)\;,
\]
is continuous with positive values and $E^t$-homogeneous. Here
$S_{0}$ denotes the unit sphere of $\mathbb{R}^{d}$ for a well
chosen norm defined from $E$ and $\mu$ a finite measure on
$S_{0}$.
\end{proposition}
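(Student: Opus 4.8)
The plan is to verify the three asserted properties of $\rho$---positivity, $E^t$-homogeneity, and continuity---each of which follows from structural features of the defining integral. Throughout I would fix the well-chosen norm and the finite measure $\mu$ on the unit sphere $S_0$, and keep in mind that the eigenvalues of $E$ (hence of $E^t$) have positive real parts, so that $\lambda_{\min}(E)>0$ controls the behaviour of $r^{E^t}\theta$ as $r\to 0$ and $r\to\infty$.

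First I would establish $E^t$-homogeneity, since it is the cleanest and dictates the shape of the integrand. Fix $a>0$ and substitute $\xi=a^{E^t}\eta$ inside the formula; then $r^{E^t}\xi=(ra)^{E^t}\eta$ using the one-parameter group law $r^{E^t}a^{E^t}=(ra)^{E^t}$. Performing the change of variable $s=ra$ in the radial integral turns $\rmd r/r^2$ into $a\,\rmd s/s^2$ (because $\rmd r/r^2=a\,\rmd s/s^2$ under $s=ra$), which produces exactly one factor of $a$ and leaves the angular integral against $\mu$ untouched. Hence $\rho(a^{E^t}\xi)=a\rho(\xi)$, as required. I would be careful to note that the integrand depends on $\xi$ only through $\langle\xi, r^{E^t}\theta\rangle=\langle r^{E^t}\xi,\theta\rangle$ after transposing, so the substitution is transparent.

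Next I would treat convergence and positivity together, as both hinge on the same two-sided estimate. Near $r=0$, the bound $1-\cos(u)\le u^2/2$ gives an integrand controlled by $|\langle \xi, r^{E^t}\theta\rangle|^2/r^2$; since $\|r^{E^t}\theta\|\lesssim r^{\lambda_{\min}(E^t)-\eps}$ for small $r$ and any $\eps>0$, the quadratic factor kills the $r^{-2}$ singularity and yields integrability on $(0,1]$. Near $r=\infty$, I would instead use the crude bound $1-\cos\le 2$ together with the fact that $\|r^{E^t}\theta\|\to\infty$ polynomially, so that $\int_1^\infty r^{-2}\,\rmd r$ converges outright; the $1-\cos$ factor is bounded and the $r^{-2}$ decay suffices. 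Integrating against the finite measure $\mu$ over the compact sphere $S_0$ then gives $\rho(\xi)<\infty$ for every $\xi$. For strict positivity when $\xi\neq 0$, I would argue that $1-\cos(\langle\xi,r^{E^t}\theta\rangle)$ is nonnegative everywhere and cannot vanish identically: because $\mu$ charges $S_0$ and the map $(r,\theta)\mapsto\langle\xi,r^{E^t}\theta\rangle$ is not identically a multiple of $2\pi$ on a set of full measure, the integral is strictly positive.

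Finally I would obtain continuity by dominated convergence. The integrand is jointly continuous in $(\xi,r,\theta)$, and on any compact set of $\xi$ the two-sided estimate above provides a fixed $\mu\otimes(r^{-2}\rmd r)$-integrable dominating function uniform in $\xi$ (the small-$r$ bound uses $\sup|\xi|$ in the quadratic estimate, the large-$r$ bound is $\xi$-independent). Dominated convergence then lets me pass limits under the integral sign, giving continuity of $\rho$ on all of $\R^d$. The main obstacle I anticipate is the small-$r$ estimate: controlling $\|r^{E^t}\theta\|$ requires a uniform operator-norm bound of the form $\|r^{E^t}\|\le C\,r^{\lambda_{\min}(E^t)-\eps}$ valid for all small $r$, which is standard for matrix exponentials but must be stated carefully since $E^t$ need not be diagonalizable and may contribute logarithmic factors $\log(1/r)$ through its Jordan blocks; these logarithms are harmless because $r^{2\lambda_{\min}}$ dominates any power of $\log(1/r)$, but this is the step where the spectral hypothesis on $E$ is genuinely used.
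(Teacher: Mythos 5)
The paper offers no proof of this proposition at all---it is quoted from Theorem~2.11 of \cite{BMS07}---so your attempt can only be judged on its own merits. Its overall strategy (change of variables in $r$ for homogeneity, the bound $1-\cos u\le\min(u^{2}/2,2)$ split at $r=1$ for finiteness, dominated convergence for continuity) is indeed the standard route. But two steps do not survive scrutiny.

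The decisive gap is the convergence at $r\to 0$. Your own estimate gives an integrand of order $|\xi|^{2}\,\|r^{E^{t}}\theta\|^{2}\,r^{-2}\lesssim r^{2\lambda_{\min}(E)-2\varepsilon-2}$, which is integrable on $(0,1]$ only when $2\lambda_{\min}(E)>1$. The claim that ``the quadratic factor kills the $r^{-2}$ singularity'' is therefore false in general: already in dimension one with $E=\lambda\le 1/2$, the substitution $u=r^{\lambda}$ turns the radial integral into $\lambda^{-1}\int_{0}^{\infty}(1-\cos(\xi u))\,u^{-1-1/\lambda}\,\rmd u$, which diverges at $0$. An extra hypothesis $\lambda_{\min}(E)>1/2$ (implicit in the cited source) is needed, and since the surrounding paper assumes only positive real parts, this is precisely the point a proof must surface; yours glosses over it. Second, your homogeneity computation rests on the identity $\langle\xi,r^{E^{t}}\theta\rangle=\langle r^{E^{t}}\xi,\theta\rangle$, which is wrong unless $r^{E^{t}}$ is symmetric: transposing gives $\langle r^{E}\xi,\theta\rangle$. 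Done correctly, $\langle a^{E^{t}}\xi,r^{E^{t}}\theta\rangle=\langle\xi,a^{E}r^{E^{t}}\theta\rangle$ and $a^{E}$ does not combine with $r^{E^{t}}$ into $(ar)^{E^{t}}$; the formula as printed is actually $E$-homogeneous, and one must write $r^{E}\theta$ in the integrand (as in \cite{BMS07}) to get $E^{t}$-homogeneity. A smaller but real issue: your positivity argument is only an assertion---if $d\ge 2$, $E=Id$ and $\mu$ is a point mass at $\theta_{0}$, then $\rho(\xi)=0$ for every nonzero $\xi$ orthogonal to $\theta_{0}$, so strict positivity requires a fullness assumption on the support of $\mu$ that must be invoked explicitly.
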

\begin{remark}Remark that this formula is not the most appropriate
for numerical simulations since we need to approximate an
integral. In what follows, we will give simpler examples of
$(\R^d,E^t)$ pseudo-norms in the sense that these examples lead to
{\bf exact} numerical computations. We then conclude that we give
{\bf explicit} examples of $(\R^d,E^t)$ pseudo-norms (in the
numerical sense).
\end{remark}
\begin{remark}
We also refer to P.G.Lemarie (see \cite{PGL94}) whose definition
of $(\R^d,E^t)$ pseudo-norms is slightly different (see
Remark~\ref{rem:PGL} below).
\end{remark}
Finally, in the special case where matrix $E$ is diagonalizable,
an explicit expression is given (Corollary $2.12$ of~\cite{BMS07})
:
\begin{proposition}
Let $E$ a diagonalizable matrix with positive eigenvalues
$$
0<\lambda_1\leq\cdots\leq \lambda_d,
$$
with associated eigenvectors
$$
\theta_1,\cdots,\theta_d,
$$ and $C_1,\cdots,C_d>0$. Then for any
$\tau<2\lambda_{\min}(E)$
\[
\rho(x)=\left(\sum\limits_{j=1}^{d}C_j|<x,\theta_j>|^{\tau/\lambda_j}\right)^{1/\tau},
\]
is a continuous, $E^t$-homogeneous function with positive values.
\end{proposition}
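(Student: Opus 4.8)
The plan is to verify the three asserted properties---positivity, continuity, and $E^t$-homogeneity---directly from the explicit formula, with the homogeneity computation being the heart of the argument. Throughout I take $\tau>0$ (so that $0<\tau<2\lambda_{\min}(E)$), since this is what makes each exponent $\tau/\lambda_j$ positive and hence the expression well defined, continuous and positive; for $\tau<0$ the summands would blow up on each hyperplane $\{\langle x,\theta_j\rangle=0\}$ and the properties would fail. The upper bound $\tau<2\lambda_{\min}(E)$ plays no role in these three properties and is needed only later, to ensure integrability of the associated spectral density $\rho^{-2H-Tr(E)}$.

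First I would record the action of $a^{E^t}$ on the linear forms $\xi\mapsto\langle\xi,\theta_j\rangle$. Since $E\theta_j=\lambda_j\theta_j$, the series defining $a^E$ gives $a^E\theta_j=\sum_{k\geq 0}\frac{\log^k(a)\,\lambda_j^k}{k!}\theta_j=a^{\lambda_j}\theta_j$. Using that $(a^{E^t})^t=a^E$, because transposition commutes with the matrix exponential, I obtain the key identity
\[
\langle a^{E^t}\xi,\theta_j\rangle=\langle\xi,a^E\theta_j\rangle=a^{\lambda_j}\langle\xi,\theta_j\rangle .
\]
This is the bridge between the $E^t$-scaling appearing in the hypothesis and the eigenvectors of $E$ appearing in the formula, and it is the one step that genuinely uses the diagonalizability of $E$.

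With this identity in hand the homogeneity is short: substituting into the definition, each summand becomes $C_j|a^{\lambda_j}\langle\xi,\theta_j\rangle|^{\tau/\lambda_j}=a^{\tau}C_j|\langle\xi,\theta_j\rangle|^{\tau/\lambda_j}$, so the factor $a^{\tau}$ pulls out of the whole sum and taking the $(1/\tau)$-th power yields $\rho(a^{E^t}\xi)=a\,\rho(\xi)$. For positivity I would use that $\{\theta_1,\dots,\theta_d\}$ is a basis of $\mathbb{R}^d$: for $x\neq 0$ at least one $\langle x,\theta_j\rangle$ is nonzero, so the inner sum is strictly positive and hence $\rho(x)>0$. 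Continuity follows because each $x\mapsto|\langle x,\theta_j\rangle|^{\tau/\lambda_j}$ is continuous (positivity of the exponent removes any singularity on the zero set of the linear form), a finite sum of continuous nonnegative functions is continuous, and $s\mapsto s^{1/\tau}$ is continuous on $[0,\infty)$, which also handles the origin, where $\rho(0)=0$.

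I expect no serious obstacle here, as the result is essentially a direct verification. The only point that genuinely demands care is the transpose bookkeeping in the displayed identity: it would be easy to confuse $a^{E}$ with $a^{E^t}$ and thereby attach the wrong eigenvalue exponent to each linear form, which would break the cancellation that produces the clean factor $a^{\tau}$.
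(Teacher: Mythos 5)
Your verification is correct: the key identity $\langle a^{E^t}\xi,\theta_j\rangle=\langle\xi,a^{E}\theta_j\rangle=a^{\lambda_j}\langle\xi,\theta_j\rangle$ is exactly what drives the homogeneity, positivity correctly uses that the $\theta_j$ form a basis, and your remark that the statement must implicitly be read with $0<\tau<2\lambda_{\min}(E)$ (else the exponents $\tau/\lambda_j$ are negative and continuity and positivity fail on the hyperplanes $\langle x,\theta_j\rangle=0$) is a fair and accurate observation about the proposition as printed. Note that the paper offers no proof of this statement --- it is quoted from Corollary 2.12 of \cite{BMS07} --- so there is nothing to contrast with; your direct computation is the standard argument.
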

In this paper, we aim at extending these results and then
describing for any given admissible matrix $E$ and Hurst index
$H$, all the spectral densities of this model of OSGRF with
stationary increments in an {\bf explicit} way.

\section{Definition of explicit spectral densities of the model}\label{SecFour}

As has already been said in Section~\ref{SecTwo}, the main
difficulty is to define appropriate spectral densities of the
model. To this end, we note that the class of the spectral density
used in~\cite{BMS07} is intimately related to the class of the
so-called {\bf pseudo-norms} defined in~\cite{PGL94}.  We then
explicit the link between two $(\R^d,E)$ pseudo-norms when the
matrix $E$ is given. Thereafter using a Jordan reduction, for each
matrix $E$ with positive real parts of the eigenvalues, we give an
explicit example of a suitable spectral density of the studied
model. Combining these two results, we entirely describe {\bf in a
explicit way} the class of spectral densities of the Gaussian
fields considered in~\cite{BMS07}.
\subsection{More about pseudo-norms}\label{SecThree}
Let us first recall some well known facts about pseudo-norms which
can be found with more details in~\cite{PGL94}. This concept is
fundamental when defining anisotropic functional spaces since
using pseudo-norms allows to introduce anisotropic topology on
$\R^d$. Thus, even if the introduction of this concept is not
necessary to the definition of spectral densities, it is of great
importance to relate the notion of anisotropic spectral densities
to the concept of pseudo-norms. This gives us indeed all the tools
of "anisotropic functional analysis" to study, for example, the
sample paths properties of the fields in anisotropic spaces
(see~\cite{CV10}) and to better understand the inherent topology
of these spaces.

\begin{definition}
A function $\rho$ defined on $\R^d$ is a $(\R^d,E)$ pseudo-norm if
it satisfies the three following properties :
\begin{enumerate}
\item $\rho$ is continuous on $\R^d$, \item $\rho$ is
$E$-homogeneous, {\it i.e.} $\rho(a^Ex)= a\rho(x) \quad \forall x
\in \R^d, \, \forall a>0$, \item $\rho$ is positive on $\R^d
\setminus \{0\}$.
\end{enumerate}
\end{definition}
\begin{remark}\label{rem:PGL}
Our definition of $(\R^d,E)$ pseudo-norm is a slightly modified
version of the concept of pseudo-norm on $(\mathbb{R}^{d},A)$
defined by P.G.Lemari\'e in~\cite{PGL94}. In~\cite{PGL94}, $A$
denotes a matrix with eigenvalues having a modulus greater than
one. A pseudo-norm on $(\mathbb{R}^{d},A)$ is a function
satisfying properties $1$ and $3$ of the previous definition and
the following property :
\[
\rho(A x)=|\det(A)|\rho(x),\mbox{ for any }x\mbox{ in
}\mathbb{R}^{d}\;.
\]
Further for any $d\times d$ matrix $A$ with eigenvalues having
modulus greater than one and any compactly supported smooth
function $\phi$, an example of pseudo-norm on $(\mathbb{R}^{d},A)$
is provided by
\[
\rho_{\phi}(x)=\sum\limits_{j\in\mathbb{Z}}|\det(A)|^{j}\phi(A^j
x)\;.
\]
Remark that if $\rho$ is a $(\mathbb{R}^{d},E)$ pseudo-norm then
$\rho(\cdot)^{1/Tr(E)}$ is a pseudo-norm on $(\mathbb{R}^{d},A)$
in the sense of \cite{PGL94} with $A=a^{E}$ for any given $a>0$.
The properties satisfied by $(\mathbb{R}^{d},E)$ pseudo-norms are
very similar to those of pseudo-norms on $(\mathbb{R}^{d},A)$, as
proved in~\cite{PGL94}. Moreover, the example of pseudo-norm on
$(\mathbb{R}^{d},A)$ given in~\cite{PGL94} can be adapted to our
case. Indeed for any compactly supported smooth function $\phi$
\[
\rho_\phi(x)=\displaystyle\int_{0}^{+\infty}\phi(a^{-E}x)\rmd a\;,
\]
is a $(\mathbb{R}^{d},E)$ pseudo-norm. This formula also leads to
numerical approximations and thus is a non explicit one.
\end{remark}
The term of pseudo-norm is justified by the following proposition
which is proved for instance in~\cite{PGL94} or~\cite{BMS07} :
\begin{proposition}
Let $\rho$ a $(\R^d,E)$ pseudo-norm. There exists $C>0$ such that
$$
\rho(x+y) \le C (\rho(x)+\rho(y)), \quad \forall x, \, y \in
\R^d\;.
$$
\end{proposition}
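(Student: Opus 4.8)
The plan is to exploit the $E$-homogeneity to reduce the claim to a statement on a fixed compact set, where continuity does the rest. First I would normalize: it suffices to find $C>0$ such that $\rho(x+y)\le C$ whenever $x$ and $y$ both lie in the sublevel set $K=\{z\in\R^d:\rho(z)\le 1\}$. Indeed, given arbitrary $x,y$ not both zero, put $s=\rho(x)+\rho(y)>0$ and apply the scaling relation to $x'=s^{-E}x$ and $y'=s^{-E}y$. Since $\rho(s^{-E}x)=s^{-1}\rho(x)\le 1$ and likewise for $y$, both $x'$ and $y'$ lie in $K$, so $\rho(x'+y')\le C$; but $x'+y'=s^{-E}(x+y)$, whence $\rho(x'+y')=s^{-1}\rho(x+y)$ and therefore $\rho(x+y)\le C\,(\rho(x)+\rho(y))$. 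The case $x=y=0$ is trivial, since $\rho(0)=0$ is itself forced by homogeneity (from $\rho(0)=\rho(a^{E}0)=a\rho(0)$ for all $a>0$).

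The heart of the argument is thus to show that $K$ is compact, for then the continuous map $(x,y)\mapsto\rho(x+y)$ attains a finite maximum on $K\times K$, and that maximum is the sought constant $C$. The set $K$ is closed because $\rho$ is continuous. To prove that $K$ is bounded I would establish that $\rho(x)\to+\infty$ as $|x|\to+\infty$. Here I would invoke the operator-scaling polar coordinates: since every eigenvalue of $E$ has positive real part, each $x\neq 0$ can be written uniquely as $x=r^{E}u$ with $r>0$ and $u$ in the Euclidean unit sphere $S^{d-1}$, and $|x|\to+\infty$ forces $r\to+\infty$ (cf. the spectral estimates for $a\mapsto a^{E}$ underlying the constructions of \cite{BMS07}). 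By $E$-homogeneity $\rho(x)=r\,\rho(u)$, and since $\rho$ is continuous and strictly positive on the compact set $S^{d-1}$ it attains there a minimum $m>0$; hence $\rho(x)\ge m\,r\to+\infty$. Consequently $K$ is bounded, and being closed it is compact.

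It then follows that $K\times K$ is compact, so the continuous function $(x,y)\mapsto\rho(x+y)$ reaches a finite maximum $C$ on it, which together with the reduction above yields the inequality for all $x,y\in\R^d$.

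I expect the main obstacle to be precisely the compactness of $K$, that is, the quantitative control of $a\mapsto a^{E}$ ensuring $r\to+\infty$ whenever $|x|=|r^{E}u|\to+\infty$, uniformly in $u\in S^{d-1}$. This rests entirely on the assumption that the real parts of the eigenvalues of $E$ are positive, through bounds of the form $\|a^{E}\|\le c\,a^{\lambda_{\max}(E)+\delta}$ and $|a^{E}u|\ge c'\,a^{\lambda_{\min}(E)-\delta}$ for $a\ge 1$, where $\lambda_{\max}(E)$ denotes the largest real part of an eigenvalue. This is the one point where the algebraic structure of $E$, and not merely the continuity and homogeneity of $\rho$, genuinely enters the proof.
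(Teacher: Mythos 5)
Your argument is correct. Note that the paper itself gives no proof of this proposition: it is stated as a known fact and deferred to \cite{PGL94} and \cite{BMS07}, so there is no internal proof to compare against. Your route --- rescale $x$ and $y$ by $s^{-E}$ with $s=\rho(x)+\rho(y)$ to reduce to the sublevel set $K=\{\rho\le 1\}$, then prove $K$ compact via the polar-type decomposition $x=r^{E}u$ and the positive lower bound of $\rho$ on the Euclidean sphere --- is essentially the standard argument used in the cited references. One small inaccuracy: the representation $x=r^{E}u$ with $u$ on the \emph{Euclidean} unit sphere need not be unique (the map $r\mapsto|r^{E}u|$ is not monotone for general non-normal $E$; this is why \cite{BMS07} work with the sphere of the specially built norm $\int_0^1|t^{E}x|\,t^{-1}\,\mathrm{d}t$). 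But your proof only uses \emph{existence} of such a representation (intermediate value theorem on $r\mapsto|r^{-E}x|$, which runs from $+\infty$ to $0$) together with the fact that $|x|\le\|r^{E}\|$ forces $r\to+\infty$ when $|x|\to+\infty$, and both of these are valid under the hypothesis that the eigenvalues of $E$ have positive real parts. So the gap is cosmetic, not substantive, and the conclusion stands.
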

\subsection{Relationship between two given pseudo--norms}
The main result of this section is the description of all the $(\R^d,E)$ pseudo-norms for a given matrix $E$ :
\begin{theorem} \label{Ttransfert}
Let $\rho_1$ be a $(\R^d, E)$ pseudo-norm. Then $\rho_2$ is a
$(\R^d,E)$ pseudo-norm if and only if there exists a continuous
and positive function $g$ defined on $\R^d \setminus \{0\}$ such
that
\begin{equation}\label{EqDefg}
\rho_{2}(\xi)=g(\rho_{1}(\xi)^{-E}\xi)\rho_{1}(\xi)\;.
\end{equation}
\end{theorem}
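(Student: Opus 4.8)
The plan is to prove both implications of the equivalence, the key preliminary observation being that for every $\xi\neq 0$ the point $\theta(\xi):=\rho_1(\xi)^{-E}\xi$ lies on the level set $S_{\rho_1}=\{x\in\R^d:\rho_1(x)=1\}$: applying the $E$-homogeneity of $\rho_1$ with $a=\rho_1(\xi)^{-1}>0$ gives $\rho_1(\theta(\xi))=\rho_1(\xi)^{-1}\rho_1(\xi)=1$. I would next record that $S_{\rho_1}$ is compact. Using the polar decomposition of \cite{BMS07}, every nonzero $x$ writes uniquely as $x=r^{E}u$ with $r>0$ and $u$ in the unit sphere $S_0$ of a norm adapted to $E$, the map $(r,u)\mapsto r^{E}u$ being a homeomorphism from $(0,\infty)\times S_0$ onto $\R^d\setminus\{0\}$. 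Homogeneity then gives $\rho_1(r^{E}u)=r\,\rho_1(u)$, so $\rho_1(x)=1$ forces $r=1/\rho_1(u)$; since $\rho_1$ is continuous and bounded away from $0$ on the compact set $S_0$, the set $S_{\rho_1}=\{(1/\rho_1(u))^{E}u:u\in S_0\}$ is a continuous image of $S_0$, hence compact and at positive distance from the origin. This is the structural fact that controls the formula near $0$.

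For the sufficiency direction ($\Leftarrow$), assume $g$ is continuous and positive on $\R^d\setminus\{0\}$ and set $\rho_2(\xi)=g(\theta(\xi))\rho_1(\xi)$ for $\xi\neq 0$, together with $\rho_2(0)=0$. Positivity on $\R^d\setminus\{0\}$ is immediate from $g>0$ and $\rho_1>0$. For $E$-homogeneity I would compute directly: since $\rho_1(a^{E}\xi)=a\rho_1(\xi)$ and the matrices $a^{E}$, $a^{-E}$ and $\rho_1(\xi)^{-E}$ are all of the form $s^{E}$ with $s>0$ and hence commute (with $s^{E}t^{E}=(st)^{E}$), the argument of $g$ is invariant,
\[
\rho_1(a^{E}\xi)^{-E}(a^{E}\xi) = a^{-E}\rho_1(\xi)^{-E}a^{E}\xi = \rho_1(\xi)^{-E}\xi = \theta(\xi),
\]
whence $\rho_2(a^{E}\xi)=g(\theta(\xi))\,a\rho_1(\xi)=a\rho_2(\xi)$. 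Continuity on $\R^d\setminus\{0\}$ follows from continuity of $\xi\mapsto\theta(\xi)$, of $g$ and of $\rho_1$. The only delicate point is continuity at the origin: since $\theta(\xi)\in S_{\rho_1}$ and $g$ is continuous on the compact set $S_{\rho_1}$, it is bounded there by some $M$, so $0\le\rho_2(\xi)\le M\rho_1(\xi)\to 0=\rho_2(0)$ as $\xi\to 0$.

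For the necessity direction ($\Rightarrow$), assume $\rho_2$ is a $(\R^d,E)$ pseudo-norm and simply set $g(x)=\rho_2(x)/\rho_1(x)$ on $\R^d\setminus\{0\}$; this is continuous and positive because $\rho_1,\rho_2$ are continuous and positive off the origin. It remains to verify that this $g$ reproduces $\rho_2$ through the formula. Fixing $\xi\neq 0$ and using $\rho_1(\theta(\xi))=1$ together with the $E$-homogeneity of $\rho_2$ applied with $a=\rho_1(\xi)^{-1}$, namely $\rho_2(\theta(\xi))=\rho_1(\xi)^{-1}\rho_2(\xi)$, I obtain
\[
g(\theta(\xi))\rho_1(\xi) = \frac{\rho_2(\theta(\xi))}{\rho_1(\theta(\xi))}\rho_1(\xi) = \rho_1(\xi)^{-1}\rho_2(\xi)\,\rho_1(\xi) = \rho_2(\xi),
\]
which is exactly Equation~(\ref{EqDefg}).

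The main obstacle I anticipate is not algebraic but topological: establishing the compactness of $S_{\rho_1}$ and the validity of the polar decomposition $x=r^{E}u$, which together underpin the continuity of $\rho_2$ at the origin in the sufficiency direction. Once that structural fact about pseudo-norms (available from \cite{PGL94} and \cite{BMS07}) is secured, both implications reduce to the short homogeneity computations above; in particular the necessity direction is essentially forced, the content of the theorem lying in the fact that the candidate $g=\rho_2/\rho_1$ depends only on the $S_{\rho_1}$-component $\theta(\xi)$ when fed into the formula.
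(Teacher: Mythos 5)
Your proof is correct and follows essentially the same route as the paper: for necessity you set $g=\rho_2/\rho_1$, note its $a^E$-invariance, and substitute $a=\rho_1(\xi)^{-1}$, exactly as the paper does; for sufficiency, which the paper dismisses as ``straightforward,'' you supply the verification in full, including the one genuinely non-trivial point (continuity of $\rho_2$ at the origin via compactness of the level set $\{\rho_1=1\}$, obtained from the polar decomposition of \cite{BMS07}), which is a welcome addition rather than a deviation.
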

\begin{proof}
Let $\rho_1$ and $\rho_2$ be two $(\R^d, E)$ pseudo-norms. Then
the function $g= \frac{\rho_2}{\rho_1}$ is continuous, positive on
$\R^d \setminus \{0\}$ and satisfies for all $a>0$,
$$
g(a^E \xi) = g(\xi).
$$
In particular, for a fixed $\xi$ and $a = \rho_1(\xi)^{-1}$, it
follows that
$$
\rho_{2}(\xi)=g(\xi)\rho_{1}(\xi)=g(\rho_{1}(\xi)^{-E}\xi)\rho_{1}(\xi)\;.
$$
The converse is straightforward.
\end{proof}
\\

Consider now the special case $E=Id$. Theorem~\ref{Ttransfert}
implies the following corollary
\begin{corollary}\label{CorIso}
Let $\{X(x)\}_{x\in\R^d}$ be a Gaussian field with stationary
increments admitting a continuous spectral density. Assume that
$X$ is self--similar with Hurst index $H$. Then, there exists a
continuous function $S$ defined on the unit sphere
$\{\xi\in\R^d,\,|\xi|=1\}$ with positive values such that
\begin{equation}\label{EqDefXIso}
\{X(x)\}_{x\in\R^d}\overset{\mathcal{L}}{=}\left\{\int_{\mathbb{R}^{d}}\left(\frac{e^{i<x,\xi>}-1}{|\xi|^{H+d/2}}\right)
S\left(\frac{\xi}{|\xi|}\right)\rmd\widehat{W}(\xi)\right\}_{x\in\R^d}\;.
\end{equation}
\end{corollary}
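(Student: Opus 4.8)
My plan is to recognize this corollary as the special case $E = \mathrm{Id}$ of Theorem~\ref{Ttransfert}, combined with the harmonizable representation from Section~\ref{SecTwo}. When $E = \mathrm{Id}$, the operator scaling relation~(\ref{EqAutosimHermine}) reduces to ordinary self-similarity, and an $(\R^d, \mathrm{Id})$-homogeneous function satisfies $\rho(a\xi) = a\rho(\xi)$, so the Euclidean norm $\rho_1(\xi) = |\xi|$ is itself a $(\R^d, \mathrm{Id})$ pseudo-norm. The strategy is therefore to take $\rho_1 = |\cdot|$ as a reference pseudo-norm and invoke Theorem~\ref{Ttransfert} to express an arbitrary pseudo-norm $\rho_2$ in terms of it.

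First I would argue that since $X$ is a self-similar Gaussian field with stationary increments admitting a continuous spectral density $f$, Theorem~\ref{ThDefX} (together with the uniqueness of the spectral measure recalled in Section~\ref{SecTwo}) lets me write $X = X_{\rho_2}$ for some $(\R^d, \mathrm{Id})$ pseudo-norm $\rho_2$, with spectral density $f(\xi) = \rho_2(\xi)^{-2H - d}$ (using $\mathrm{Tr}(\mathrm{Id}) = d$). Next I would apply Theorem~\ref{Ttransfert} with $\rho_1(\xi) = |\xi|$: there exists a continuous positive function $g$ on $\R^d \setminus \{0\}$, invariant under the scaling $\xi \mapsto a^{\mathrm{Id}}\xi = a\xi$, such that
\[
\rho_2(\xi) = g\!\left(\frac{\xi}{|\xi|}\right)|\xi|,
\]
since $\rho_1(\xi)^{-E}\xi = |\xi|^{-1}\xi$ when $E = \mathrm{Id}$. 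Because $g$ is scale-invariant, it is determined by its restriction to the unit sphere, so I may regard $g$ as a continuous positive function on $\{|\xi| = 1\}$.

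Finally I would substitute this expression back into the harmonizable representation~(\ref{EqHarmRepGene}). The spectral density becomes
\[
f(\xi)^{1/2} = \rho_2(\xi)^{-H - d/2} = g\!\left(\frac{\xi}{|\xi|}\right)^{-H - d/2} |\xi|^{-H - d/2},
\]
and setting $S(\xi/|\xi|) = g(\xi/|\xi|)^{-H - d/2}$ — which is again continuous and positive on the unit sphere — yields precisely~(\ref{EqDefXIso}). I expect the main obstacle to be the first step: justifying that a self-similar Gaussian field with stationary increments and continuous spectral density must arise from some pseudo-norm $\rho_2$ in the form of Theorem~\ref{ThDefX}. This requires tying the homogeneity of $f$ (forced by self-similarity via the scaling of the covariance) to the $E$-homogeneity defining a pseudo-norm, and invoking continuity of $f$ to guarantee $\rho_2$ is continuous and nonvanishing away from the origin; the remaining algebraic manipulations are routine.
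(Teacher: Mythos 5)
Your proposal is correct and follows essentially the same route as the paper: use the harmonizable representation, deduce from self-similarity (via a change of variables and uniqueness of the spectral measure) that a suitable negative power of $f$ is an $(\R^d,\mathrm{Id})$ pseudo-norm, apply Theorem~\ref{Ttransfert} with $\rho_1=|\cdot|$, and set $S=g^{-H-d/2}$. The "main obstacle" you flag at the end is exactly the step the paper carries out by identifying the spectral densities of $\{X(ax)\}$ and $\{a^HX(x)\}$, and your bookkeeping of the exponents ($f=\rho^{-2H-d}$, hence $f^{1/2}=\rho^{-H-d/2}$) is in fact cleaner than the paper's own.
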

\begin{proof}
By assumption the Gaussian field with stationary increments
$\{X(x)\}_{x\in\R^d}$ admits a continuous spectral density denoted
$f$. Then
\[
X(x)\overset{\mathcal{L}}{=}\int_{\R^d}(e^{i<x,\xi>}-1)f^{1/2}(\xi)\rmd
\widehat{W}(\xi)\;.
\]
Since $X$ is self--similar with Hurst index $H$,
\[
\{X(ax)\}_{x\in\R^d}\overset{\mathcal{L}}{=}\{a^H
X(x)\}_{x\in\R^d}\;.
\]
By assumption,
\[
X(ax)\overset{\mathcal{L}}{=}\int_{\R^d}(e^{i<ax,\xi>}-1)f^{1/2}(\xi)\rmd
\widehat{W}(\xi)\;.
\]
Set now $\zeta = a\xi$ in the harmonizable representation of $X$
and deduce that
\[
X(ax)\overset{\mathcal{L}}{=}a^{-d/2}\int_{\R^d}(e^{i<x,\zeta>}-1)f^{1/2}(a^{-1}\zeta)\rmd
\widehat{W}(\zeta)\;.
\]
We now identify the two spectral densities of the two Gaussian
fields $\{X(ax)\}_{x\in\R^d}$, $\{a^H X(x)\}_{x\in\R^d}$ which are
equal in law . It implies that
\begin{equation}\label{EqSD}
a^{-d/2}f(a^{-1}\xi)=a^{H}f(\xi)\;,
\end{equation}
that is $\rho(\xi)=f(\xi)^{-1/(H+d/2)}$ is a $(\mathbb{R}^{d},Id)$
pseudo--norm.\\
We now apply Theorem~\ref{Ttransfert} with $E=Id$. Then any
$(\mathbb{R}^{d},Id)$ pseudo--norm $\rho$ can be written
\begin{equation}\label{EqPN}
\rho(\xi)=g(|\xi|^{-1}\xi)|\xi|\;,
\end{equation}
since the Euclidean  norm $|\cdot|$ is a $(\mathbb{R}^{d},Id)$
pseudo--norm. \\
We deduce that any continuous spectral density can be written as
\[
f(\xi)=\left(g(|\xi|^{-1}\xi)|\xi|\right)^{-H-d/2}\;.
\]
Set now $S(\xi)=g(|\xi|^{-1}\xi)^{-H-d/2}$ to deduce the required
result.
\end{proof}
\\

Thus we recover well--known results of Dobrushin
(see~\cite{Dob79}). Indeed, in~\cite{Dob79} a complete description
of self-similar generalized Gaussian fields with stationary
$r$--th increments is given. It implies in particular
Corollary~\label{CorIso}. The class of anisotropic Gaussian field
defined by the representation~(\ref{EqDefXIso}) has been widely
studied (see~\cite{BJR97, BE03}). Recently in~\cite{Ist07}, Istas
has defined an estimator of $S$ using shifted generalized
quadratic variations. Let us emphasize that if an anisotropy $E$
may be known, using Theorem~\ref{Ttransfert} and a fixed
$(\R^d,E^t)$ pseudo--norm $\rho_1$ (see Section~\ref{SecPN}
below), one can probably define in a similar way an estimator of
function
$g$ defined in~(\ref{EqDefg}).\\

In next section, we now define explicit examples of $(\R^d,E^{t})$
pseudo-norms.
\subsection{Explicit construction of $(\R^d,E^{t})$
pseudo-norms}\label{SecPN} The result of this section is based on
the real Jordan decomposition of any $d\times d$ matrix $E$.
\begin{proposition}\label{PropJRed} Any $d\times d$ matrix $E$ can be written, using the real Jordan reduction as
$$
E=
P\begin{pmatrix}E_{1}&&0\\&\ddots&\\0&&E_{m_{1}+m_{2}}\end{pmatrix}P^{-1},
$$
where $(m_1,m_2) \in \N \times \N \setminus \{(0,0)\}$, with
\begin{enumerate}
\item For all $\ell_1 \in \{1, \cdots, m_1\}$,
$$
E_{\ell_1}= \lambda_{\ell_1} Id \text{ or } E_{\ell_1}=
\begin{pmatrix}\lambda_{\ell_{1}}&1&&0\\&\ddots&\ddots&\\&&&1\\0&&&\lambda_{\ell_{1}}\end{pmatrix},
$$
where $\lambda_{\ell_{1}}\in\R$, \item For all $\ell_2 \in
\{1,\cdots, m_2\}$,
$$
E_{m_1+\ell_2}=
\begin{pmatrix}A_{\ell_{2}}&&0\\&\ddots&\\0&&A_{\ell_{2}}\end{pmatrix}
\text{ or }
E_{m_1+\ell_2}= \begin{pmatrix}A_{\ell_{2}}&I_{2}&&0\\&\ddots&\ddots&\\&&&I_{2}\\0&&&A_{\ell_{2}}\end{pmatrix},\\
$$
with $A_{\ell_2}=
\begin{pmatrix}\alpha_{\ell_{2}}&\beta_{\ell_{2}}\\-\beta_{\ell_{2}}&\alpha_{\ell_{2}}\end{pmatrix},
I_{2}=\begin{pmatrix}1&0\\0&1\end{pmatrix}$ where
$(\alpha_{\ell_{2}},\beta_{\ell_{2}})\in\R^2$
\end{enumerate}
\end{proposition}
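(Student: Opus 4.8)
The plan is to deduce the real Jordan reduction from the classical complex Jordan canonical form. First I would regard $E$ as acting on $\mathbb{C}^d$ and invoke the complex Jordan theorem: there is an invertible complex matrix $Q$ such that $Q^{-1}EQ$ is block diagonal, each block a standard Jordan block $J_k(\lambda)$ (with eigenvalue $\lambda$ on the diagonal and $1$'s on the superdiagonal) attached to some $\lambda\in Sp(E)$. Since $E$ has real entries, its characteristic polynomial has real coefficients, so its eigenvalues are either real or occur in complex conjugate pairs $\lambda,\bar\lambda$ with $\beta:=\mathrm{Im}(\lambda)\neq 0$.

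Next I would split into the two cases appearing in the statement. For a real eigenvalue $\lambda$, the complex and real generalized eigenspaces coincide, so the restriction of $E$ there is already a block of type 1: either $\lambda\,Id$ when that eigenspace is semisimple, or the Jordan block displayed in the proposition otherwise. The substantive part of the argument concerns a conjugate pair $\lambda=\alpha+i\beta$, $\bar\lambda=\alpha-i\beta$. The key structural fact to establish is that the complex Jordan blocks attached to $\lambda$ and to $\bar\lambda$ come in matched pairs of equal size. This follows from conjugation symmetry: if $(v_1,\dots,v_k)$ is a Jordan chain for $\lambda$, i.e. $(E-\lambda\,Id)v_1=0$ and $(E-\lambda\,Id)v_j=v_{j-1}$ for $j\geq 2$, then applying complex conjugation and using that $E$ is real shows that $(\bar v_1,\dots,\bar v_k)$ is a Jordan chain for $\bar\lambda$ of the same length. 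Hence the block structure for $\bar\lambda$ mirrors that for $\lambda$, and the two may be treated together.

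For each such matched pair I would then pass to the real vectors $e_j=\mathrm{Re}(v_j)$ and $f_j=\mathrm{Im}(v_j)$, $1\leq j\leq k$. Separating $Ev_j=\lambda v_j+v_{j-1}$ into real and imaginary parts yields $Ee_j=\alpha e_j-\beta f_j+e_{j-1}$ and $Ef_j=\beta e_j+\alpha f_j+f_{j-1}$, so that in the real basis $(e_1,f_1,\dots,e_k,f_k)$ the matrix of $E$ is exactly the $2k\times 2k$ block of type 2, with the rotation–scaling block $A_{\ell_2}=\begin{pmatrix}\alpha&\beta\\-\beta&\alpha\end{pmatrix}$ on the diagonal and the identity blocks $I_2$ on the superdiagonal. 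Assembling the real Jordan bases over all real eigenvalues and all conjugate pairs produces the columns of a single real invertible matrix $P$ realizing the claimed block-diagonal form.

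The main point requiring care — and where the bookkeeping is heaviest — is this last step. One must check that the real and imaginary parts $\{e_j,f_j\}$ of a complex Jordan chain are linearly independent over $\R$ (using $\beta\neq 0$, which prevents $v_j$ and $\bar v_j$ from being proportional), so that $P$ is genuinely invertible, and that the superdiagonal couplings reproduce precisely the $I_2$ blocks rather than some other $2\times 2$ matrix. Once these verifications are in place, the decomposition follows, with $m_1$ counting the real-eigenvalue blocks and $m_2$ the conjugate-pair blocks, and $(m_1,m_2)\neq(0,0)$ since $d\geq 1$.
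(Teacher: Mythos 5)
Your argument is correct: it is the standard derivation of the real Jordan form from the complex one, and the sign conventions work out (from $Ev_j=(\alpha+i\beta)v_j+v_{j-1}$ one gets $Ee_j=\alpha e_j-\beta f_j+e_{j-1}$ and $Ef_j=\beta e_j+\alpha f_j+f_{j-1}$, whose columns in the basis $(e_1,f_1,\dots,e_k,f_k)$ are exactly the block with $A_{\ell_2}=\begin{pmatrix}\alpha&\beta\\-\beta&\alpha\end{pmatrix}$ on the diagonal and $I_2$ on the superdiagonal). There is nothing to compare against: the paper states Proposition~\ref{PropJRed} as the classical real Jordan reduction and supplies no proof of its own. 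The only slight imprecision is your phrase that the restriction of $E$ to the generalized eigenspace of a real eigenvalue ``is already a block of type 1'': that restriction may be a direct sum of several Jordan blocks of different sizes, each of which is a separate $E_{\ell_1}$ in the proposition's indexing (with repeated $\lambda_{\ell_1}$ allowed) --- a bookkeeping remark, not a gap.
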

As a consequence of the real Jordan decomposition, we state the
following proposition :
\begin{proposition}\label{PropClass}The notations are those of
Proposition~\ref{PropJRed}. For any $\ell$, denote $d_{\ell}$ the
size of the matrix $E_{\ell}$. Assume that for each $1 \leq \ell
\le m_1+m_2$, we are given a $(\R^{d_{\ell}},E_{\ell}^t)$
pseudo--norm $\tau_{\ell}$. Define the function $\varphi$ for any
$\xi=(\xi_1, \cdots,
\xi_{m_1+m_2})\in\prod_{\ell=1}^{m_1+m_2}\R^{d_{\ell}}$  as
$$
\varphi(\xi)= \left(\tau_1^2(\xi_1) + \cdots +
\tau_{m_1+m_2}^2(\xi_{m_1+m_2})\right)^{1/2}\;.
$$
Then, the function $\rho$ defined for any $\xi\in\R^d$ as
$$
\rho(\xi) = \varphi(P^t\xi)
$$
is a $(\R^d,E^t)$ pseudo-norm. Further $f=\rho^{-(2H+Tr(E))}$ is a
suitable spectral density of an operator scaling Gaussian random
field with stationary increments.
\end{proposition}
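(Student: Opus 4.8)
The plan is to verify directly that $\rho = \varphi \circ P^t$ satisfies the three defining properties of a $(\R^d, E^t)$ pseudo-norm, and then to invoke Theorem~\ref{ThDefX} for the claim about the spectral density. The key algebraic fact I would record at the outset is that the real Jordan decomposition $E = P D P^{-1}$, with $D = \mathrm{diag}(E_1, \ldots, E_{m_1+m_2})$ block diagonal, yields upon transposition $E^t = (P^t)^{-1} D^t P^t$, hence $a^{E^t} = (P^t)^{-1} a^{D^t} P^t$ for every $a > 0$; moreover $a^{D^t} = \mathrm{diag}(a^{E_1^t}, \ldots, a^{E_{m_1+m_2}^t})$ acts block by block on $\prod_\ell \R^{d_\ell}$.

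Continuity and positivity I would dispatch quickly. The map $\varphi$ is continuous, being a finite sum of squares of the continuous functions $\tau_\ell$ followed by a square root, and $P^t$ is linear, so $\rho$ is continuous. For positivity, if $\xi \neq 0$ then $\eta := P^t \xi \neq 0$ because $P$ is invertible, so at least one block component $\eta_\ell$ is nonzero; since $\tau_\ell$ is positive on $\R^{d_\ell} \setminus \{0\}$, the sum defining $\varphi(\eta)^2$ is strictly positive, whence $\rho(\xi) = \varphi(\eta) > 0$.

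The heart of the argument is the $E^t$-homogeneity, and this is where I would be most careful with the bookkeeping of transposes. Writing $\eta = P^t \xi$, I would compute $\rho(a^{E^t}\xi) = \varphi(P^t a^{E^t} \xi) = \varphi(a^{D^t} \eta)$ using the conjugation identity above. Because $a^{D^t}$ is block diagonal, its action on $\eta = (\eta_1, \ldots, \eta_{m_1+m_2})$ is $(a^{E_1^t}\eta_1, \ldots, a^{E_{m_1+m_2}^t}\eta_{m_1+m_2})$, and the hypothesis that each $\tau_\ell$ is $E_\ell^t$-homogeneous gives $\tau_\ell(a^{E_\ell^t}\eta_\ell) = a\,\tau_\ell(\eta_\ell)$. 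Substituting into the definition of $\varphi$, the common factor $a^2$ can be pulled out of every summand and the square root returns a single factor $a$, so that $\rho(a^{E^t}\xi) = a\,\varphi(\eta) = a\,\rho(\xi)$. This establishes the homogeneity and completes the proof that $\rho$ is a $(\R^d, E^t)$ pseudo-norm.

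Finally, for the spectral-density claim I would simply appeal to Theorem~\ref{ThDefX} and the remark following it: once $\rho$ is known to be continuous, positive on $\R^d \setminus \{0\}$ and $E^t$-homogeneous, choosing any $H \in (0, \lambda_{\min}(E))$ guarantees that $f = \rho^{-(2H + Tr(E))}$ is a spectral density and that the associated harmonizable field $X_\rho$ exists, has stationary increments, and satisfies the operator-scaling relation~(\ref{EqAutosimHermine}). I do not expect a genuine obstacle here, since the whole argument is a verification; the only point requiring real attention is keeping the transposition $E^t = (P^t)^{-1} D^t P^t$ and the block-diagonal action of $a^{D^t}$ straight, as a single transpose or indexing slip there would break the homogeneity computation.
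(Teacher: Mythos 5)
Your proof is correct and follows essentially the same route as the paper: the paper sets $F=P^{-1}EP$ (your block-diagonal $D$), uses the conjugation identity $P^{t}a^{E^{t}}=a^{F^{t}}P^{t}$ together with the blockwise $E_\ell^{t}$-homogeneity of the $\tau_\ell$ to get $\rho(a^{E^{t}}\xi)=a\rho(\xi)$, and dismisses the remaining points as straightforward. You merely spell out the continuity, positivity and spectral-density steps that the paper leaves implicit.
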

\begin{proof}
Let $F=P^{-1}E P$. Then for any $\zeta=(\zeta_1, \cdots,
\zeta_{m_1+m_2})\in\prod_{\ell=1}^{m_1+m_2}\R^{d_{\ell}}$:
\[
\begin{array}{lll}
\varphi(a^{F^{t}}\zeta)
&=&\left(\tau_{1}^{2}(a^{E_{1}^{t}}\zeta_{1}) + \cdots +
\tau_{m_{1}+m_{2}}^{2}(a^{E_{m_{1}+m_{2}}^{t}}\zeta_{m_{1}+m_{2}})\right)^{\frac{1}{2}}\\
&=&\left(a^{2}\tau_{1}^{2}(\zeta_{1}) + \cdots +
a^{2}\tau_{m_{1}+m_{2}}^{2}(\zeta_{m_{1}+m_{2}})\right)^{\frac{1}{2}}\\
&=&a\varphi(\zeta)
\end{array}
\]
It follows that
\[
\rho(a^{E^{t}}\xi)
=\varphi(P^{t}a^{E^{t}}\xi)=\varphi(a^{F^{t}}P^{t}\xi)=a\varphi(P^{t}\xi)
=a\rho(\xi)
\]
The conclusion is then straightforward.
\end{proof}
\\
Let us illustrate Proposition~\ref{PropClass} through an example :
\begin{example}
Set
\[
E=\begin{pmatrix}2&1\\0&1\end{pmatrix}\;.
\]
Note that $E$ is a diagonalizable matrix since it has two
different eigenvalues. One has $E=PDP^{-1}$ with
\[
D=\begin{pmatrix}2&0\\0&1\end{pmatrix},\,P=\begin{pmatrix}1&-1\\0&1\end{pmatrix}.
\]
A $(\mathbb{R}^{d},D)$ pseudo-norm can be defined as
\[
\rho_{D}(\xi)=|\xi_1|^{1/2}+|\xi_2|\;.
\]
Hence Proposition~\ref{PropClass} allows to give an explicit
expression of a $(\mathbb{R}^{d},E^{t})$ pseudo-norm :
\[
\rho_{E}(\xi)=\rho_{D}(P^{t}\xi)=|\xi_1 |^{1/2}+|\xi_2-\xi_1|\;.
\]
Remark that in this case, Corollary $2.12$ of~\cite{BMS07} exactly
yields the same result since it gives an explicit example of
$(\mathbb{R}^{d},E^t)$ pseudo-norm in the special where matrix $E$
is diagonalizable.
\end{example}
Thus, it is sufficient to define an explicit pseudo-norm for the
four following matrices.
\begin{enumerate}
\item $E_1(\lambda)=
\begin{pmatrix}\lambda&&0\\&\ddots&\\0&&\lambda\end{pmatrix}$, $\lambda\in\R^*_+$.
\item $E_2(\lambda)=
\begin{pmatrix}\lambda&1&&0\\&\ddots&\ddots&\\&&\ddots&1\\0&&&\lambda\end{pmatrix}$,$\lambda\in\R^*_+$.
\item $E_3(\alpha,\beta)=
\begin{pmatrix}A&&0\\&\ddots&\\0&&A\end{pmatrix}$ with $A=
\begin{pmatrix}\alpha&\beta\\-\beta&\alpha\end{pmatrix}$,
$(\alpha,\beta)\in\R^*_+\times \R$. \item $E_4(\alpha,\beta)=
\begin{pmatrix}A&I_{2}&&0\\&\ddots&\ddots&\\&&\ddots&I_{2}\\0&&&A\end{pmatrix}$
with $A=\begin{pmatrix}\alpha&\beta\\-\beta&\alpha\end{pmatrix}$,
$(\alpha,\beta)\in\R^*_+\times \R$.\\
\end{enumerate}
We emphasize that Proposition~\ref{PropClass} above has two
important consequences :
\begin{itemize}
\item The first consequence is that,
Lemmas~\ref{LemCas1},~\ref{LemCas2},~\ref{LemCas3},~\ref{LemCas4},
Proposition~\ref{PropClass} and Theorem~\ref{Ttransfert} give a
complete description of the spectral densities and then of the
class of Gaussian fields introduced in~\cite{BMS07}. \item
Moreover, it implies that all the Gaussian fields belonging to the
class being studied can be generated from {\bf four generic cases}
corresponding to four specific geometries.
\end{itemize}

In the four following lemmas, we define a $(\R^d,E)$ pseudo-norm
in each generic case. Recall that we denote $|\cdot|$ the
Euclidean norm on $\mathbb{R}^d$.\\

Let us first consider the case $E=E_1(\lambda)$ for some
$\lambda\in\R^*_+$:
\begin{lemma}\label{LemCas1}
The function $\rho_1$, defined for $\xi \in \R^d$ by
\begin{equation}\label{EqRho1}
\rho_1(\xi) = \left| \xi \right|^{1/\lambda}\;,
\end{equation}
is a $(\R^d, E_1^t(\lambda))$ pseudo-norm.
\end{lemma}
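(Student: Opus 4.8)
The plan is to verify the three defining conditions of a $(\R^d,E_1^t(\lambda))$ pseudo-norm for the candidate $\rho_1(\xi)=|\xi|^{1/\lambda}$ directly from the definition given at the start of Section~\ref{SecThree}. The first thing I would record is that $E_1(\lambda)=\lambda\, Id$ is symmetric, so its transpose is itself: $E_1^t(\lambda)=\lambda\, Id$. This reduces the homogeneity requirement to understanding the one-parameter group $a\mapsto a^{\lambda\, Id}$.

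The key step is to compute $a^{E_1^t(\lambda)}$ explicitly. Using the series definition of $a^E$ recalled at the end of Section~\ref{SecOne},
\[
a^{\lambda\, Id}=\exp\big((\log a)\,\lambda\, Id\big)=\sum_{k\ge 0}\frac{(\lambda\log a)^k}{k!}\,Id = a^{\lambda}\, Id,
\]
so that $a^{E_1^t(\lambda)}\xi=a^{\lambda}\xi$ for every $\xi\in\R^d$ and every $a>0$. The $E_1^t(\lambda)$-homogeneity then follows from a one-line computation: using the ordinary homogeneity of the Euclidean norm,
\[
\rho_1\big(a^{E_1^t(\lambda)}\xi\big)=|a^{\lambda}\xi|^{1/\lambda}=\big(a^{\lambda}|\xi|\big)^{1/\lambda}=a\,|\xi|^{1/\lambda}=a\,\rho_1(\xi),
\]
where I use $\lambda>0$ to take the $1/\lambda$-th power and $a^{\lambda}>0$.

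It remains to check continuity and positivity, both of which are immediate. Continuity of $\rho_1$ on all of $\R^d$ holds because $\xi\mapsto|\xi|$ is continuous and $t\mapsto t^{1/\lambda}$ is continuous on $[0,+\infty)$ for $\lambda>0$ (in particular $\rho_1(0)=0$, so there is no issue at the origin). Positivity on $\R^d\setminus\{0\}$ holds since $|\xi|>0$ there and $t\mapsto t^{1/\lambda}$ is strictly positive on $(0,+\infty)$. There is essentially no obstacle in this lemma: the only point requiring care is the identification $a^{E_1^t(\lambda)}=a^{\lambda}\,Id$, that is, recognizing that the linear scaling associated with $\lambda\, Id$ reduces to the scalar dilation $\xi\mapsto a^{\lambda}\xi$; once this is in hand, the three conditions are verified by inspection.
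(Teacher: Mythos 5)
Your proof is correct and simply spells out the elementary verification that the paper dismisses with ``The conclusion is straightforward'': the identification $a^{E_1^t(\lambda)}=a^{\lambda}\,Id$ followed by the check of continuity, homogeneity and positivity is exactly the intended argument. No gap and no divergence from the paper's approach.
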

\begin{proof}
The conclusion is straightforward.\\
\end{proof}
\\
We now consider the case $E=E_2(\lambda)$ for some
$\lambda\in\R^*_+$:
\begin{lemma}\label{LemCas2}
Let us define the functions $\tau_i$ and $\Phi_i$ for any
$i\in\{1,\cdots,d\}$ as follows
\begin{itemize}
\item If $i = 1$, for any $\xi=(\xi_1,\cdots,\xi_d)\in\R^d$,
$\tau_1(\xi)  = \Phi_1(\xi) = \left|\xi_1\right|$. \item If $i \ge
2$, for any $\xi=(\xi_1,\cdots,\xi_d)\in\R^d$
\[
\tau_i(\xi)=
\begin{cases}
\left|\xi_i \right| \quad {\text{ if }}\quad \xi_1= \xi_2=\cdots=\xi_{i-1}=0\;,\\
\Phi_{i-1}(\xi) \left( \Phi_{i-1}(\xi)^{-\lambda^{-1}E^t}
\xi\right)_i \quad \text{otherwise}\;.
\end{cases}
\]
and
\[
\Phi_i(\xi) = \left|\tau_1(\xi)\right|+\cdots +
\left|\tau_i(\xi)\right|\;.
\]
\end{itemize}
Then, the function $\rho_2$ defined for $\xi \in \R^d$ by
\begin{equation}\label{EqRho2}
\rho_2(\xi) = \Phi_d(\xi)^{1/\lambda}\;,
\end{equation}
is a $(\R^d,E_2^t(\lambda))$ pseudo-norm.
\end{lemma}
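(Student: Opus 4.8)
The plan is to verify the three defining properties of a $(\R^d,E_2^t(\lambda))$ pseudo-norm — positivity off the origin, $E_2^t(\lambda)$-homogeneity, and continuity — arguing by induction on the index $i$ used to build $\tau_i$ and $\Phi_i$. Positivity is the easiest: if $\xi\neq0$ and $i_0$ is the first index with $\xi_{i_0}\neq0$, then either $i_0=1$ and $\tau_1(\xi)=|\xi_1|>0$, or $i_0\ge2$ and we are in the first branch, so $\tau_{i_0}(\xi)=|\xi_{i_0}|>0$; in both cases $\Phi_d(\xi)\ge|\tau_{i_0}(\xi)|>0$, whence $\rho_2(\xi)>0$. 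The same remark applied to $(\xi_1,\dots,\xi_{i-1})$ shows $\Phi_{i-1}(\xi)>0$ exactly on the ``otherwise'' region, so $\Phi_{i-1}(\xi)^{-\lambda^{-1}E^t}\xi$ is well defined there.

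The algebraic heart of the matter is an explicit formula for $\tau_i$. Writing $E_2^t(\lambda)=\lambda\,\mathrm{Id}+N$, where $N$ is nilpotent with $(N\xi)_i=\xi_{i-1}$ (and $\xi_j=0$ for $j\le0$), and setting $s=\Phi_{i-1}(\xi)$, I would expand $s^{-\lambda^{-1}E^t}=s^{-1}\exp\!\big(-\lambda^{-1}(\log s)N\big)$ and read off the $i$-th coordinate to get
\[
\tau_i(\xi)=\sum_{k=0}^{i-1}\frac{(-\lambda^{-1}\log \Phi_{i-1}(\xi))^k}{k!}\,\xi_{i-k}\,.
\]
Homogeneity then follows by induction: assuming $\tau_j(a^{E^t}\xi)=a^\lambda\tau_j(\xi)$ for $j<i$ gives $\Phi_{i-1}(a^{E^t}\xi)=a^\lambda\Phi_{i-1}(\xi)$, and the crucial cancellation is
\[
\Phi_{i-1}(a^{E^t}\xi)^{-\lambda^{-1}E^t}(a^{E^t}\xi)=(a^\lambda s)^{-\lambda^{-1}E^t}a^{E^t}\xi=a^{-E^t}s^{-\lambda^{-1}E^t}a^{E^t}\xi=s^{-\lambda^{-1}E^t}\xi\,,
\]
where I use that all powers of $E^t$ commute and $(a^\lambda s)^{-\lambda^{-1}E^t}=a^{-E^t}s^{-\lambda^{-1}E^t}$. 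Multiplying by $\Phi_{i-1}(a^{E^t}\xi)=a^\lambda s$ yields $\tau_i(a^{E^t}\xi)=a^\lambda\tau_i(\xi)$, hence $\Phi_d(a^{E^t}\xi)=a^\lambda\Phi_d(\xi)$ and $\rho_2(a^{E^t}\xi)=a\rho_2(\xi)$. One checks along the way that the branch condition $\xi_1=\dots=\xi_{i-1}=0$ is preserved by $\xi\mapsto a^{E^t}\xi$, so the two cases are consistent under scaling.

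The main obstacle is continuity, precisely the matching of the two branches along the stratum $Z_i=\{\xi_1=\dots=\xi_{i-1}=0\}$. On the open set where $(\xi_1,\dots,\xi_{i-1})\neq0$, $\tau_i$ is a product of continuous functions and of the smooth map $s\mapsto s^{-\lambda^{-1}E^t}$, $s>0$; on $Z_i$ it equals $|\xi_i|$; the question is whether $\tau_i(\xi)\to\xi_i^{\ast}$ as $\xi\to\xi^{\ast}\in Z_i$ from the open region, where $s=\Phi_{i-1}(\xi)\to0^{+}$ while the factors $(\log s)^k$ blow up. From the explicit formula it suffices that each correction term $(\log s)^k\xi_{i-k}\to0$. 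The engine for this is the quantitative estimate, proved by induction on $j$,
\[
|\xi_j|\le \Phi_j(\xi)\,R_j\big(|\log\Phi_j(\xi)|\big)\qquad(1\le j\le i-1),
\]
for $\xi$ near $Z_i$, with $R_j$ a polynomial of degree $j-1$ with nonnegative coefficients. Inverting the formula for $\tau_j$, bounding $|\tau_j|\le\Phi_j(\xi)$, and using the monotonicity of $t\mapsto t\,R(|\log t|)$ for small $t>0$ together with $\Phi_{j-k}\le\Phi_{j-1}\le\Phi_j$ closes this induction.

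Granting the estimate, each correction term in $\tau_i$ is controlled by $s\,|\log s|^{k}R_{i-k}(|\log s|)$, which tends to $0$ since $s(\log s)^{M}\to0$ as $s\to0^{+}$ for every fixed $M$. Hence $\tau_i(\xi)\to\xi_i^{\ast}$, so $|\tau_i|$ — and therefore $\Phi_i$ — is continuous across $Z_i$; note that taking absolute values is exactly what reconciles the possibly negative limit $\xi_i^{\ast}$ with the value $|\xi_i^{\ast}|$ prescribed on $Z_i$. Assembling the three properties and using that $t\mapsto t^{1/\lambda}$ is continuous on $[0,\infty)$ shows that $\rho_2=\Phi_d^{1/\lambda}$ is a $(\R^d,E_2^t(\lambda))$ pseudo-norm. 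I expect the logarithmic-growth control in this last continuity step to be the only genuinely delicate part; the homogeneity and positivity are comparatively direct once the explicit formula for $\tau_i$ is in hand.
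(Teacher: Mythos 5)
Your proof is correct and follows essentially the same route as the paper's: the decomposition $E_2^t(\lambda)=\lambda\,\mathrm{Id}+N$ with $N$ nilpotent, the resulting explicit polynomial-in-$\log\Phi_{i-1}$ formula for $\tau_i$, the commutation/cancellation argument for homogeneity, and an induction on $i$ for continuity across the stratum $\{\xi_1=\cdots=\xi_{i-1}=0\}$. In fact your quantitative estimate $|\xi_j|\le\Phi_j(\xi)\,R_j(|\log\Phi_j(\xi)|)$ and your remark that only $|\tau_i|$ (not $\tau_i$ itself, whose two branches may disagree in sign on the stratum) is continuous supply precisely the details that the paper's proof asserts without justification.
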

\begin{proof}
Let us first prove that the function $\rho_2$ is well--defined and
positive on $\R^d \setminus\{0\}$. It is clear that $\Phi_d \ge
0$. Further $\Phi_d(\xi)=0$ if and only if for any
$i\in\{1,\cdots,d\}$, $\tau_i(\xi) =0$. By induction and by
definition of $\tau_i$, it implies that
\[
|\xi_1|=\cdots=|\xi_d|=0\;,
\]
that is $\xi=0$. Therefore, $\rho_2$ is well--defined and positive
on $\R^d \setminus\{0\}$. \\
To show that $\rho_2$ is continuous, the only point to verify is
that for all $1 \le i \le d$, $\tau_i$ is continuous. To this end,
observe that for all $\xi\in\R^d$
\[
\Phi_{i-1} (\xi)  \Phi_{i-1}(\xi) ^{-\lambda^{-1}E_2^t(\lambda)}
\xi = \Phi_{i-1} (\xi)^{Id-\lambda^{-1}E_2^t(\lambda)} \xi= \;.
\]
By definition of the exponential of a matrix, one has
\[
\Phi_{i-1} (\xi)^{Id-\lambda^{-1}E_2^t(\lambda)}
\xi=\xi+\sum_{k=1}^{+\infty}\frac{(-1)^k\log^k(\Phi_{i-1}
(\xi))N^k\xi}{k!}\;,
\]
where $N=\lambda^{-1}E_2^t(\lambda)-Id$. Since
$N=\lambda^{-1}\begin{pmatrix}0&&&0\\1&\ddots&&\\&\ddots&\ddots&\\0&&1&0\end{pmatrix}$,
one has
\[
\Phi_{i-1} (\xi)  \left(  \Phi_{i-1}(\xi) ^{-\lambda^{-1}E^t}
\xi\right)_i = \xi_i
+\sum_{\ell=1}^{i-1}\frac{(-1)^{i-\ell}\log^{i-\ell}(
\Phi_{i-1}(\xi))}{(i-\ell)!\lambda^{i-\ell}}\xi_{\ell} \;.
\]
Further one has by induction on $i\in\{2,\cdots,d\}$, that, for
all $1 \le \ell \le i-1$
$$
\lim_{\xi \to 0} \left(\xi_\ell \log^{i-\ell} (\Phi_{i-1}
(\xi))\right) =0\;.
$$
Then the continuity of $\tau_i$ follows.\\
We now verify that $\rho_2$ satisfies the homogeneity condition.
It can be done by induction on $i$, showing that, for all $1 \le i
\le d$, one has
$$
\tau_i(a^{-E^t_2(\lambda)}\xi) = a^{-\lambda}\tau_i(\xi) \quad
\text{ and}\quad \rho_i(a^{-E^t_2(\lambda)}\xi) = a^{-
\lambda}\Phi_i(\xi)
$$
Indeed, assume that the result holds for $i-1$, then for any $a>0$
and any $\xi$ such that $\xi_1,\cdots, \xi_i$ are not both equal
to $0$ (the other case being trivial),  one has
\begin{eqnarray*}
\tau_i(a^{-E^t_2(\lambda)} \xi)& = &
\vert \Phi_{i-1}(a^{-E^{t}_2(\lambda)}\xi)|(|\rho_{i-1}(a^{-E^{t}_2(\lambda)}\xi)|^{-\frac{E^{t}_2(\lambda)}{\lambda}}a^{-E^{t}_2(\lambda)}\xi)_{i}\\
&=&a^{-\lambda}|\Phi_{i-1}(\xi)|((a^{-\lambda}|\Phi_{i-1}(\xi)|)^{-\frac{E^{t}_2(\lambda)}{\lambda}}
a^{-E^{t}_2(\lambda)}\xi)_{i}\\
&=&a^{-\lambda}\tau_{i}(\xi).
\end{eqnarray*}
We have then proved the homogeneity property of function
$\rho_2$.\\
\end{proof}
\\
We now consider the case $E=E_3(\alpha,\beta)$ for some
$(\alpha,\beta)\in\R^*_+\times \R$:
\begin{lemma}\label{LemCas3}
The function $\rho_3$ defined for $\xi \in \R^d$ by
\begin{equation}\label{EqRho3}
\rho_3(\xi) = \left| \xi \right|^{1/\alpha}\;,
\end{equation}
is a $(\R^d, E_3^t(\alpha,\beta))$ pseudo-norm.
\end{lemma}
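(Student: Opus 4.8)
The plan is to verify the three defining properties of a $(\R^d, E_3^t(\alpha,\beta))$ pseudo-norm for the function $\rho_3(\xi) = |\xi|^{1/\alpha}$. Continuity on $\R^d$ and positivity on $\R^d \setminus \{0\}$ are immediate, since $\xi \mapsto |\xi|$ is continuous and strictly positive away from the origin and $1/\alpha > 0$. The only substantive point is the $E_3^t(\alpha,\beta)$-homogeneity, namely that $\rho_3(a^{E_3^t(\alpha,\beta)}\xi) = a\,\rho_3(\xi)$ for every $a > 0$ and $\xi \in \R^d$. By the definition of $\rho_3$ this is equivalent to showing $|a^{E_3^t(\alpha,\beta)}\xi| = a^\alpha |\xi|$, that is, that $a^{E_3^t(\alpha,\beta)}$ acts as a dilation of the Euclidean norm by the factor $a^\alpha$.

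First I would analyze a single diagonal block. Writing $A^t = \alpha I_2 + \beta J$ with $J = \begin{pmatrix} 0 & -1 \\ 1 & 0\end{pmatrix}$, and noting that $\alpha I_2$ commutes with $\beta J$, the matrix exponential factorizes as
\[
a^{A^t} = \exp\big((\log a)A^t\big) = a^{\alpha}\,\exp\big((\beta\log a)J\big).
\]
Using $J^2 = -I_2$ one recognizes $\exp(\theta J) = \cos\theta\, I_2 + \sin\theta\, J$, the planar rotation by angle $\theta$; in particular $\exp((\beta\log a)J)$ is orthogonal and hence preserves the Euclidean norm on $\R^2$. Consequently $|a^{A^t} v| = a^{\alpha}|v|$ for every $v \in \R^2$.

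Next I would assemble the blocks. Since $E_3(\alpha,\beta)$ is block diagonal with identical blocks $A$, its transpose $E_3^t(\alpha,\beta)$ is block diagonal with blocks $A^t$, and therefore $a^{E_3^t(\alpha,\beta)}$ is block diagonal with blocks $a^{A^t}$. Decomposing $\xi = (\xi^{(1)},\dots,\xi^{(p)})$ into the corresponding two-dimensional coordinate blocks (with $d = 2p$), the previous step gives
\[
|a^{E_3^t(\alpha,\beta)}\xi|^2 = \sum_{k=1}^{p} |a^{A^t}\xi^{(k)}|^2 = a^{2\alpha}\sum_{k=1}^{p}|\xi^{(k)}|^2 = a^{2\alpha}|\xi|^2,
\]
so $|a^{E_3^t(\alpha,\beta)}\xi| = a^{\alpha}|\xi|$. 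Raising to the power $1/\alpha$ yields $\rho_3(a^{E_3^t(\alpha,\beta)}\xi) = a\,\rho_3(\xi)$, which is the desired homogeneity and completes the verification.

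I expect no real obstacle here: the entire argument hinges on the observation that the skew-symmetric part $\beta J$ of each block exponentiates to a rotation, so that $a^{E_3^t(\alpha,\beta)}$ is exactly $a^{\alpha}$ times an orthogonal transformation. The only mild care needed is to keep the transpose straight and to note that orthogonality of each rotation block lets one pull the factor $a^{\alpha}$ out of the Euclidean norm uniformly across all blocks.
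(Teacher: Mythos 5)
Your proof is correct and follows essentially the same route as the paper: both arguments rest on the observation that $a^{E_3^t(\alpha,\beta)} = a^{\alpha}T$ with $T$ a block-diagonal rotation (hence an isometry of the Euclidean norm), so that $|a^{E_3^t(\alpha,\beta)}\xi| = a^{\alpha}|\xi|$. You merely spell out the computation $\exp(\theta J)=\cos\theta\, I_2+\sin\theta\, J$ that the paper states without derivation.
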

\begin{remark} The function $\rho_3$ defined by~(\ref{EqRho3}) is an {\bf isotropic} $(\R^d,E_3^t(\alpha,\beta))$ pseudo--norm,
that is invariant by any isometry $T$ of $\R^d$. Up to a
multiplicative constant, it is the unique one. Indeed, let $\rho$
be another isotropic $(\R^d,E_3^t(\alpha,\beta))$ pseudo--norm.
Observe that for any $a>0$, $a^{E_3^t(\alpha,\beta)}=a^{\alpha}T$
with
\[
T=\begin{pmatrix}R&&0\\&\ddots&\\0&&R\end{pmatrix}\mbox{ where }
R=\begin{pmatrix}\cos(\beta\log(a))&-\sin(\beta\log(a))\\\sin(\beta\log(a))&\cos(\beta\log(a))
\end{pmatrix}\;.
\]
Remark that $T$ is an isometry. By assumptions on $\rho_3$
$$
\rho_3(a^\alpha\xi)=\rho_3(T^{-1} a^{E_3^t} \xi) =
\rho_3(a^{E_3^t} \xi)=a \rho_3(\xi) \quad \forall a>0\;.
$$
Then $\rho_3$ is a $(\R^d, \alpha Id)$ pseudo-norm. Now, consider
the function $g$ defined for any $\xi \in \R^d \setminus \{0\}$ by
$g= \vert \xi \vert^{-1/\alpha}\rho $. Since $\rho_3$ and $\vert
\cdot \vert^{1/\alpha}$ are two isotropic $(\R^d, \alpha Id)$
pseudo-norms, $g$ is isotropic and we have, for all $\xi \in
\R^{d}$ and $a>0$,
\[
g(a^{\alpha} \xi) = g(\xi) \;.
\]
that is setting $b=a^\alpha$, for all $b>0$
\begin{equation}\label{EqHomG}
g(b \xi) = g(\xi) \;.
\end{equation}
Hence $g$ is constant on $\R^d \setminus \{0\}$.
\end{remark}
\begin{remark}
Using Theorem~\ref{Ttransfert}, with $g$ non trivial ({\it i.e.}
non constant on the isotropic unit ball) and $\rho_1= \vert \cdot
\vert^{1/\alpha}$, we are able to define non--isotropic $(\R^d,
E_3^t(\alpha,\beta))$ pseudo-norms.
\end{remark}
\begin{proof}
Observe that for any $a>0$
\[
a^{E_3^t(\alpha,\beta)}=a^{\alpha}\begin{pmatrix}R&&0\\&\ddots&\\0&&R\end{pmatrix}\mbox{
with }
R=\begin{pmatrix}\cos(\beta\log(a))&-\sin(\beta\log(a))\\\sin(\beta\log(a))&\cos(\beta\log(a))
\end{pmatrix}\;.
\]
Since the Euclidean norm $\left|\cdot\right|$ is invariant by any
isometry, and in particular by
\[
T=\begin{pmatrix}R&&0\\&\ddots&\\0&&R\end{pmatrix}\;,
\]
one has for any $\xi\in\R^d$
\[
|a^\alpha T\xi|=|a^\alpha\xi|=a^\alpha|\xi|\;.
\]
The conclusion is then straightforward.\\
\end{proof}
\\
We now consider the case $E=E_4(\alpha,\beta)$ for some
$(\alpha,\beta)\in\R^*_+\times \R$
\begin{lemma}\label{LemCas4}
Let us define the functions $\tau_i$ and $\Phi_i$ for any $1 \le i
\le d$ as ,
\begin{itemize}
\item If $i = 1$, for all $\xi= (\xi_1, \cdots, \xi_d) \in \R^d$,
$\Phi_{1}(\xi)=\tau_{1}(\xi)  =
(|\xi_{1}|^{2}+|\xi_{2}|^{2})^{\frac{1}{2}}$. \item  If $i \geq
2$, for all $\xi= (\xi_1, \cdots, \xi_d) \in \R^d$
\[
\tau_{i}(\xi)  =
\begin{cases}
(|\xi_{2i-1}|^{2}+|\xi_{2i}|^{2})^{\frac{1}{2}}  \quad {\text{ if }}
\xi_{1}= \xi_{2}=\cdots=\xi_{2i-2}=0 &\\
 \Phi_{i-1}(\xi)\left[\left(\Phi_{i-1}(\xi)^{-\alpha^{-1} E^{t}}\xi\right)^{2}_{2i-1}
+\left(\Phi_{i-1}(\xi)^{-\alpha^{-1}E^{t}\xi}\right)^{2}_{2i}\right]^{\frac{1}{2}}
\quad  \text{otherwise}\;, &
\end{cases}
\]
and
\[
\Phi_{i}(\xi) = \left| \tau_{1}(\xi)\right|+\cdots + \left|
\tau_{i}(\xi)\right|\;.
\]
\end{itemize}
Then, the function $\rho_4$ defined for all $\xi \in \R^d$ by
\begin{equation}\label{EqRho4}
\rho_4(\xi) = \Phi_d(\xi)^{1/\alpha}\;,
\end{equation}
is a $(\R^d,E_4(\alpha,\beta))$ pseudo-norm.
\end{lemma}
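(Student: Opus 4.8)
The plan is to follow exactly the three-step scheme used in the proof of Lemma~\ref{LemCas2}: (i) $\rho_4$ is well defined and strictly positive off the origin, (ii) $\rho_4$ is continuous, and (iii) $\rho_4$ is $E_4^t(\alpha,\beta)$-homogeneous. The only genuinely new feature compared with Lemma~\ref{LemCas2} is that the diagonal blocks of $E_4^t(\alpha,\beta)$ are no longer scalar but carry a rotation part; the whole strategy consists in isolating this rotation part and observing that, being an isometry, it leaves invariant the Euclidean norms of the coordinate pairs on which the $\tau_i$ are built.

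For step (i) I would argue by induction on the coordinate pairs. Since each $\tau_i$ is non-negative, $\Phi_d(\xi)=0$ forces $\tau_i(\xi)=0$ for every $i$. From $\tau_1(\xi)=(|\xi_1|^2+|\xi_2|^2)^{1/2}=0$ one gets $\xi_1=\xi_2=0$; feeding this into the first branch of $\tau_2$ gives $(|\xi_3|^2+|\xi_4|^2)^{1/2}=0$, hence $\xi_3=\xi_4=0$, and so on, so that $\Phi_d(\xi)=0$ if and only if $\xi=0$. Thus $\rho_4=\Phi_d^{1/\alpha}$ is well defined and positive on $\R^d\setminus\{0\}$.

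Step (ii) is, I expect, the main obstacle: one must make sense of the generic branch of $\tau_i$ near the set $\xi_1=\cdots=\xi_{2i-2}=0$ where the two branches meet. Set $M=\alpha^{-1}E_4^t(\alpha,\beta)-Id$. A direct computation shows $M=D+N$, where $D$ is block-diagonal with diagonal blocks $\begin{pmatrix}0&-\beta/\alpha\\\beta/\alpha&0\end{pmatrix}$ and $N$ is the nilpotent, strictly block-subdiagonal matrix with blocks $\alpha^{-1}I_2$. Since $D$ and $N$ commute,
\[
\Phi_{i-1}(\xi)^{-M}=\exp(-M\log\Phi_{i-1}(\xi))=\exp(-D\log\Phi_{i-1}(\xi))\,\exp(-N\log\Phi_{i-1}(\xi))\;.
\]
Here $\exp(-D\,s)$ is block-diagonal with rotation blocks, hence an isometry preserving each coordinate pair, while $\exp(-N\,s)$ is polynomial in $s$ because $N$ is nilpotent. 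Absorbing the factor $\Phi_{i-1}(\xi)$ and using that the rotation part leaves the Euclidean norm of the $(2i-1,2i)$ pair invariant, I would rewrite
\[
\tau_i(\xi)=\left\|\,\xi_{(2i-1,2i)}+\sum_{\ell=1}^{i-1}\frac{(-1)^{i-\ell}\log^{i-\ell}(\Phi_{i-1}(\xi))}{(i-\ell)!\,\alpha^{i-\ell}}\,\xi_{(2\ell-1,2\ell)}\right\|\;,
\]
where $\xi_{(2\ell-1,2\ell)}$ denotes the pair $(\xi_{2\ell-1},\xi_{2\ell})$. This is the exact analogue of the scalar formula obtained in Lemma~\ref{LemCas2}, and continuity of $\tau_i$ across the boundary follows, as there, from the limits $\lim_{\xi\to0}\xi_{\ell}\log^{i-\ell}(\Phi_{i-1}(\xi))=0$, proved by induction on $i$. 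Hence each $\tau_i$, and therefore $\rho_4$, is continuous.

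For step (iii) I would run the same induction as in Lemma~\ref{LemCas2}, proving simultaneously $\tau_i(a^{-E_4^t(\alpha,\beta)}\xi)=a^{-\alpha}\tau_i(\xi)$ and $\Phi_i(a^{-E_4^t(\alpha,\beta)}\xi)=a^{-\alpha}\Phi_i(\xi)$. Since $E_4^t(\alpha,\beta)$ is block lower-triangular, $a^{-E_4^t(\alpha,\beta)}$ preserves the subspace where the first $2i-2$ coordinates vanish, so the case split in the definition of $\tau_i$ is compatible with the scaling. In the first branch the diagonal block of $a^{-E_4^t(\alpha,\beta)}$ equals $a^{-\alpha}$ times a rotation, which is an isometry, so the norm of the relevant pair is multiplied exactly by $a^{-\alpha}$; in the generic branch the computation is identical to that of Lemma~\ref{LemCas2} once the inductive hypothesis on $\Phi_{i-1}$ is inserted. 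It then follows that $\rho_4(a^{E_4^t(\alpha,\beta)}\xi)=a\,\rho_4(\xi)$, which together with steps (i) and (ii) shows that $\rho_4$ is a $(\R^d,E_4^t(\alpha,\beta))$ pseudo-norm.
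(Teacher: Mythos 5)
Your argument is correct, but it is not the route the paper takes: the paper's entire proof of Lemma~\ref{LemCas4} consists in setting $r_i(\xi)=(|\xi_{2i-1}|^{2}+|\xi_{2i}|^{2})^{1/2}$ and asserting the identity $\rho_4(\xi)=\rho_2(r_1(\xi),\cdots,r_{d/2}(\xi))$, thereby reducing everything to Lemma~\ref{LemCas2} with $\lambda=\alpha$. You instead rerun the three-step verification of Lemma~\ref{LemCas2} directly, the one new ingredient being the commuting splitting $\alpha^{-1}E_4^t(\alpha,\beta)-Id=D+N$ into a block-diagonal rotation generator and a nilpotent part, so that $\Phi_{i-1}(\xi)^{-M}$ factors as an isometry preserving each coordinate pair times a matrix polynomial in $\log\Phi_{i-1}(\xi)$; your resulting closed formula for $\tau_i$ as the Euclidean norm of $\xi_{(2i-1,2i)}+\sum_{\ell}\frac{(-1)^{i-\ell}\log^{i-\ell}(\Phi_{i-1}(\xi))}{(i-\ell)!\,\alpha^{i-\ell}}\,\xi_{(2\ell-1,2\ell)}$ is right, and continuity and homogeneity then go through exactly as in Lemma~\ref{LemCas2} (with the same level of detail the paper itself supplies there). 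Your longer route actually buys something: the paper's reduction identity does not hold verbatim, because in the $E_2$ construction $\tau_i$ is a signed linear combination of the scalars $\eta_\ell$, whereas here $\tau_i$ is the Euclidean norm of the corresponding combination of the \emph{pairs}, and the norm of a combination of pairs is not the combination of their norms (already for $d=4$, $i=2$, take $\xi_{(1,2)}=(e,0)$ and $\xi_{(3,4)}=(0,1)$: one gets $\sqrt{1+e^{2}/\alpha^{2}}$ versus $|1-e/\alpha|$); likewise the map $\xi\mapsto(r_1(\xi),\cdots,r_{d/2}(\xi))$ does not intertwine $a^{E_4^t(\alpha,\beta)}$ with $a^{E_2^t(\alpha)}$. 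So the paper's proof is best read as an analogy, while your direct computation is a genuine proof of the (true) statement.
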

\begin{proof}
The proof is similar to this of Lemma~\ref{LemCas2}. Indeed, set
for all $1 \le i \le d/2$
$$
r_{i}(\xi)=\left(|\xi_{2i-1}|^{2}+|\xi_{2i}|^{2}\right)^{\frac{1}{2}},
$$
and observe that for all $\xi \in \R^d$
\[
\rho_4(\xi)=\rho_2(r_1(\xi),\cdots,r_{d/2}(\xi))\;,
\]
where $\rho_2$ is defined by~(\ref{EqRho2}) with $\lambda=\alpha$.
\end{proof}
\subsection{Two dimensional examples}
We now focus on the two dimensional case. Up to a change of basis,
$E$ is a matrix of the form :
\begin{enumerate}
\item $E_1(\lambda_1,\lambda_2)=
\begin{pmatrix} \lambda_1 & 0 \\ 0 & \lambda_2 \end{pmatrix}$ with $(\lambda_1,\lambda_2)\in (\mathbb{R}^{*}_{+})^2$.\\
\item $E_2(\lambda)=\begin{pmatrix} \lambda & 0 \\ 1 & \lambda \end{pmatrix}$ with $\lambda\in\mathbb{R}^{*}_{+}$.\\
\item  $E_3(\alpha,\beta) = \begin{pmatrix} \alpha & \beta
\\-\beta & \alpha
\end{pmatrix}$ with $(\alpha,\beta)\in\mathbb{R}^{2}$.
\end{enumerate}
Let us remark that in dimension $2$ there is not four generic
cases but three since the matrix $E$ cannot be equivalent to
$\begin{pmatrix}A&I_{2}&&0\\&\ddots&\ddots&\\&&\ddots&I_{2}\\0&&&A\end{pmatrix}$
with $A=\begin{pmatrix}\alpha&\beta\\-\beta&\alpha\end{pmatrix}$ for some $(\alpha,\beta)\in\R^2$.\\

We now give an explicit example in each of the case above using
the results of Lemma~\ref{LemCas1}, Lemma~\ref{LemCas2} and
Lemma~\ref{LemCas3}:
\begin{enumerate}
\item If $E=E_1(\lambda_1,\lambda_2)$ for some
$(\lambda_1,\lambda_2)\in (\mathbb{R}^{*}_{+})^2$, the function
$\rho_1(\xi_1, \xi_2)= (\vert \xi_1 \vert^{2/\lambda_1}+ \vert
\xi_2 \vert^{2/\lambda_2})^{1/2}$ is a $(\R^2,E_1^t)$
pseudo-norm.\\
\item If $E= E_2(\lambda)$ for some $\lambda\in\R^*_+$, the
function $\rho_2(\xi_1, \xi_2) = (\vert \xi_1 \vert + \vert \xi_2
- \frac{\xi_1}{\lambda} \ln\vert \xi_1 \vert )^{1/\lambda}$
is a $(\R^2,E_2^t)$ pseudo-norm.\\

\item If $E= E_3(\alpha,\beta)$ for some $(\alpha,\beta)\in\R^2$,
the function $\rho(\xi_1, \xi_2 ) = \vert \xi \vert^{1/\alpha} $
is a $(\R^2,E_3^t)$ pseudo-norm. More interesting is the fact that
the function
$$
\rho_{3}=\frac{|\xi_{1}\cos(\beta/\alpha
\ln(r(\xi)))-\xi_{2}\sin(\beta/\alpha
\ln(r(\xi))|}{r(\xi)^{2/\alpha}}\;,
$$
with
\[
r(\xi)=(|\xi_{1}|^{2}+|\xi_{2}|^{2})^{\frac{1}{2}}
\]
is also a $(\R^2,E_{3}^{t})$ pseudo-norm.
\end{enumerate}
Combining these results with Proposition~\ref{PropClass} yields us
to an explicit example of $(\R^2,E^t)$ pseudo-norms for any
$2\times 2$ matrix $E$ whose eigenvalues have positive real parts
(see Figure~2 below). Thereafter Theorem~\ref{Ttransfert} brings
us a complete description of the whole class of $(\R^2,E^t)$
pseudo-norms for any matrix $E$ and thus for spectral densities of
the class of OSGRF defined in~\cite{BMS07}.\\

In Figure~2 just below we represented the pseudo--norms
$\rho_1,\rho_2,\rho_3$ for some values of the matrix $E$. Remark
that the cases $E=\begin{pmatrix}1&0\\0&1\end{pmatrix}$ and
$E=\begin{pmatrix}1&0\\0&1/2\end{pmatrix}$ belong to the same
generic case (the first one).
\begin{figure}[H]\label{Fig3}
  \begin{minipage}[c]{.242\linewidth}
 \includegraphics[angle=0,width=.95\textwidth]{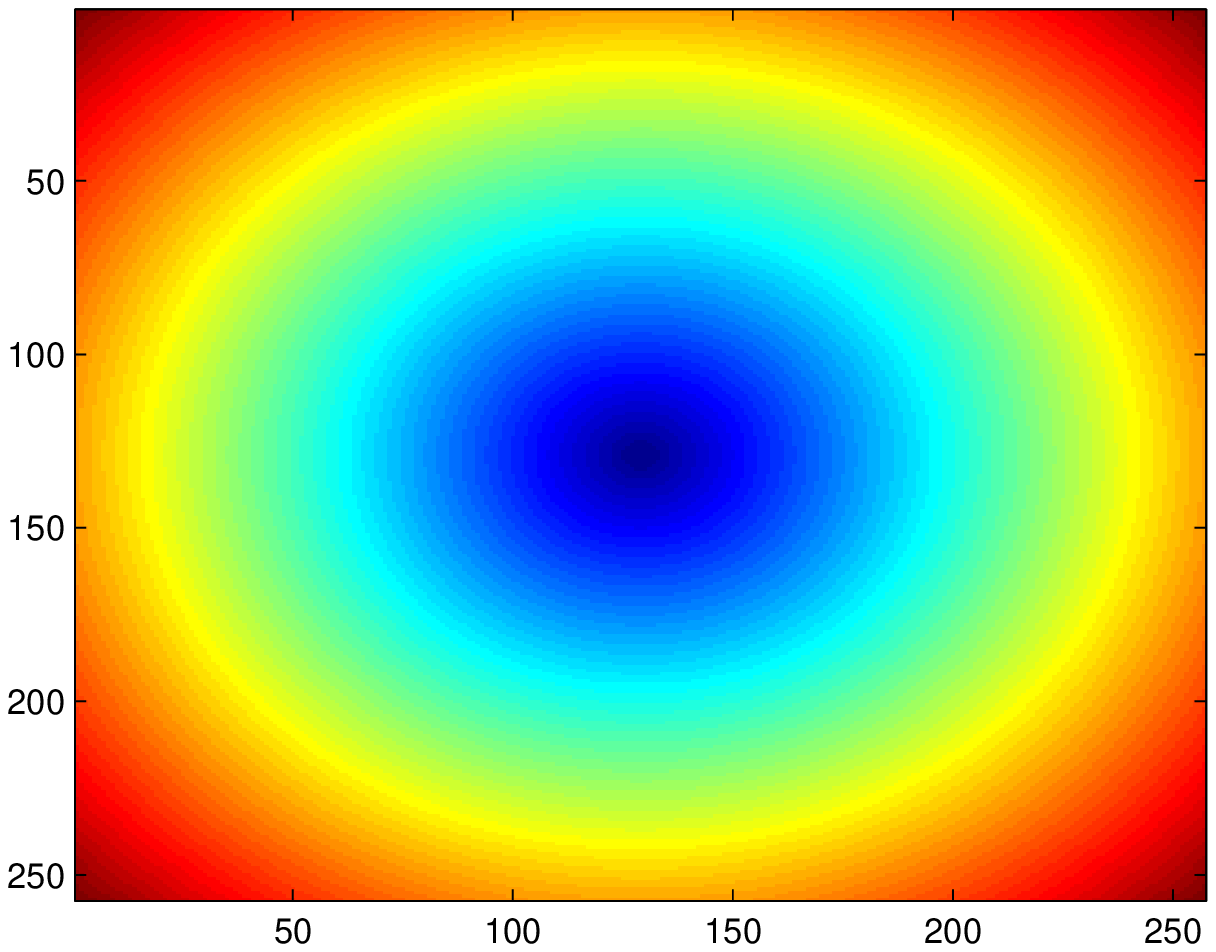}
\begin{center}$E=\begin{pmatrix}1&0\\0&1\end{pmatrix}$\end{center}
\end{minipage}
  \begin{minipage}[c]{.242\linewidth}
    \hspace{.001\textwidth}
      \includegraphics[angle=0,width=.95\textwidth]{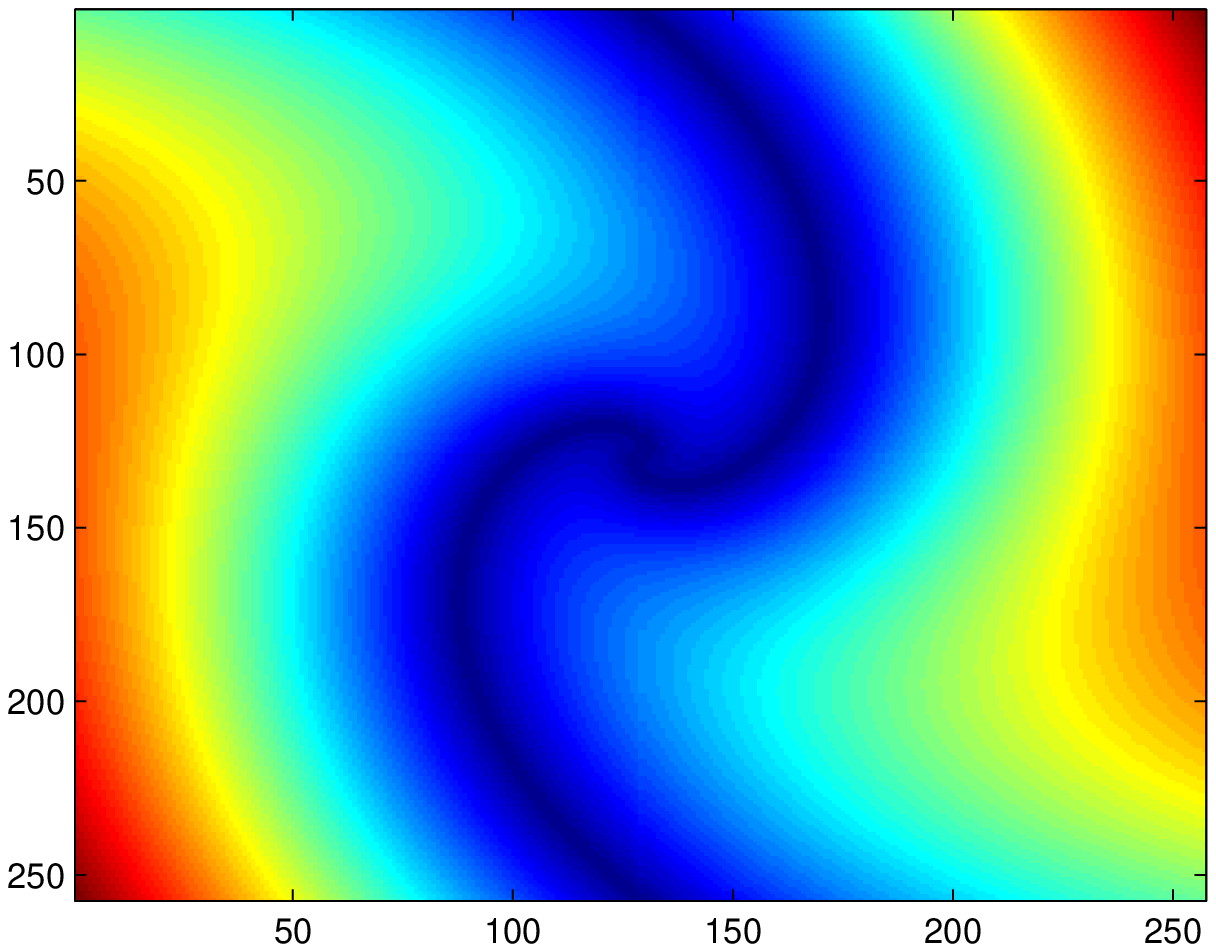}
      \begin{center}$E=\begin{pmatrix}1&-1\\1&1\end{pmatrix}$\end{center}
\end{minipage}
 \begin{minipage}[c]{.242\linewidth}
    \hspace{.001\textwidth}
       \includegraphics[angle=0,width=.95\textwidth]{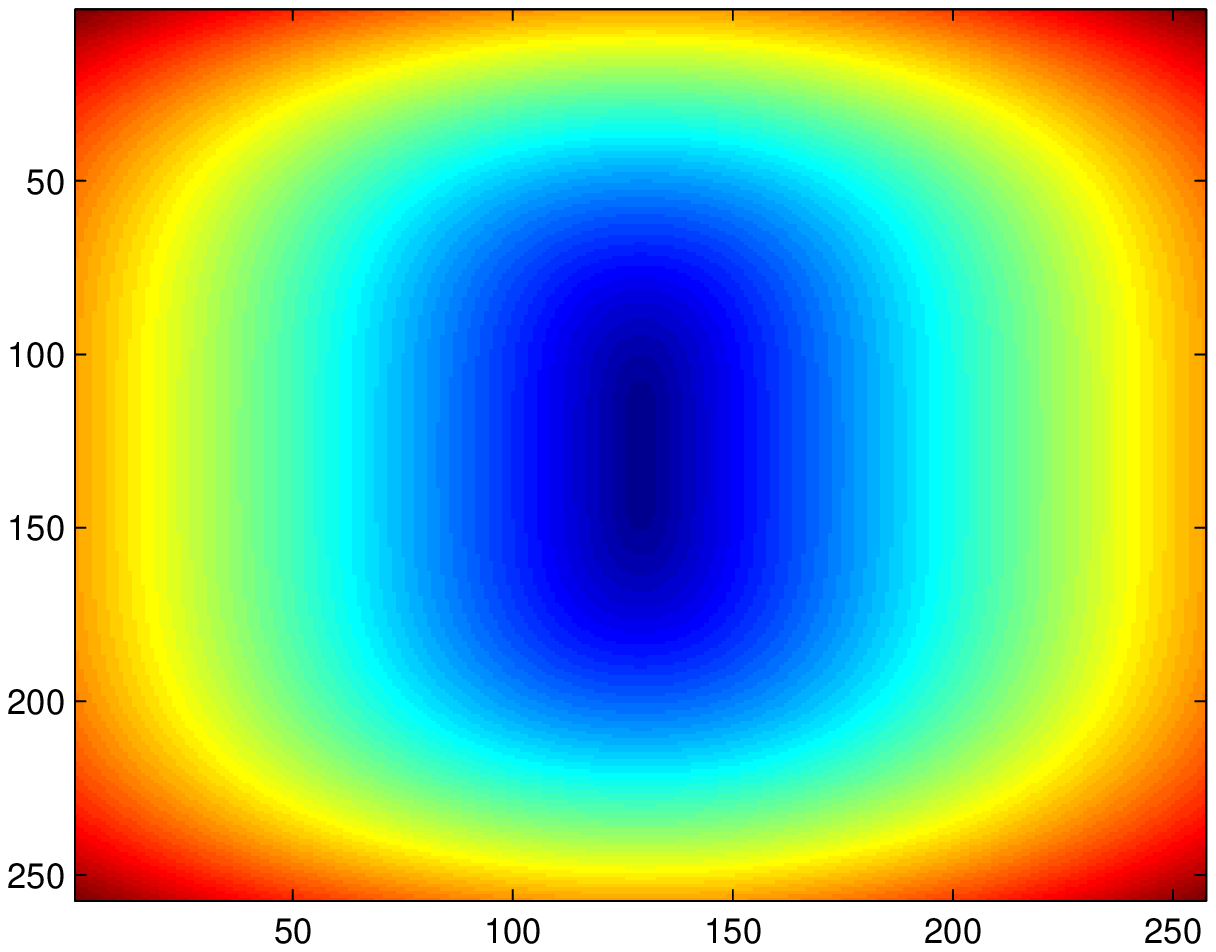}
       \begin{center}$E=\begin{pmatrix}1&0\\0&1/2\end{pmatrix}$\end{center}
\end{minipage}
\begin{minipage}[c]{.242\linewidth}
    \hspace{.001\textwidth}
       \includegraphics[angle=0,width=.95\textwidth]{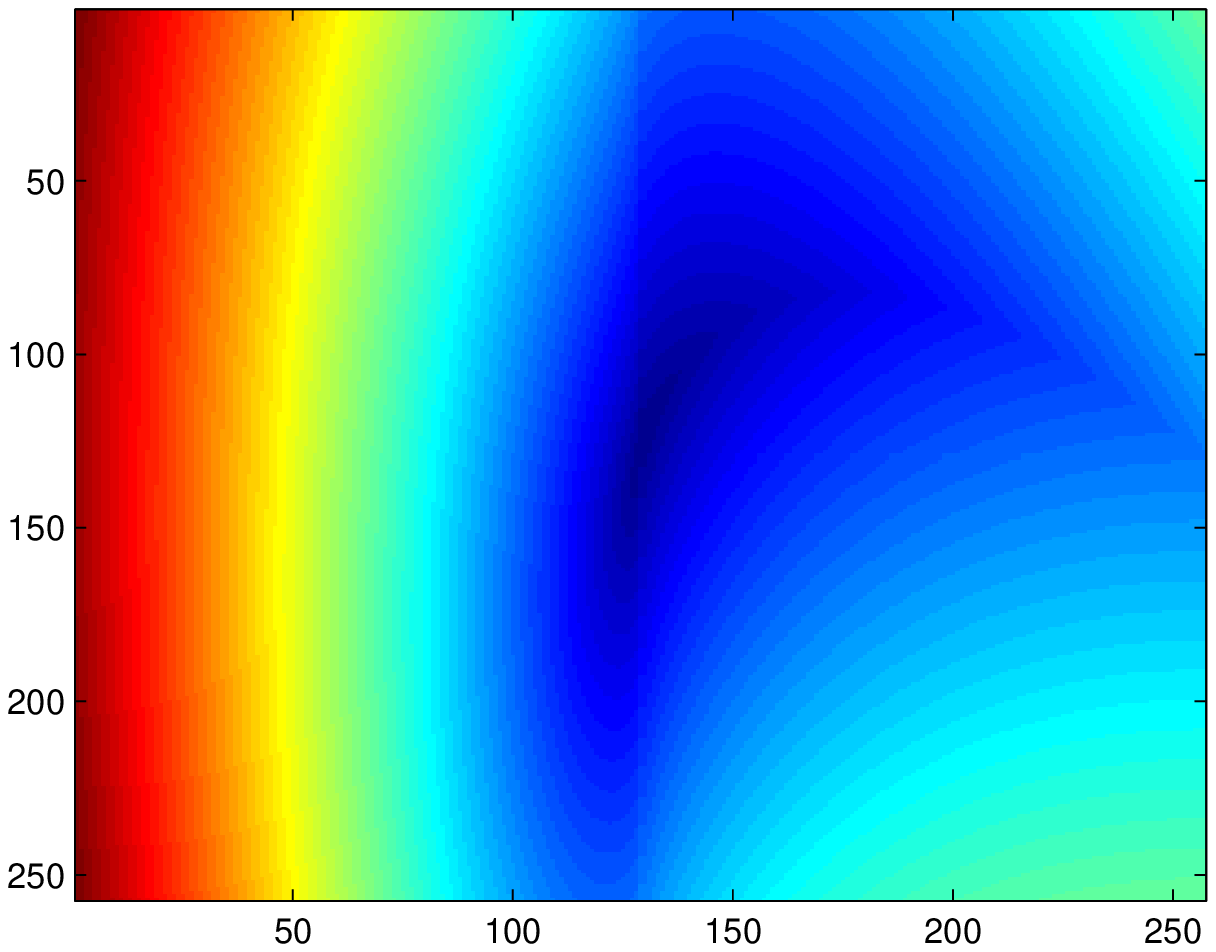}
       \begin{center}$E=\begin{pmatrix}1&1\\0&1\end{pmatrix}$\end{center}
\end{minipage}
\begin{center}
\begin{caption} \;Four pseudo-norms corresponding to the three generic two--dimensional cases.\end{caption}
\end{center}
\end{figure}

\end{document}